\documentclass[12pt]{amsart}

\usepackage[utf8]{inputenc}
\usepackage[english]{babel}
\usepackage{amsmath}
\usepackage{amsthm}
\usepackage{amssymb}
\usepackage{amsfonts}
\usepackage{graphicx}
\usepackage{hyperref}
\usepackage{cleveref}

\usepackage[foot]{amsaddr}

\theoremstyle{plain}
\newtheorem{theorem}{Theorem}[section]
\newtheorem{lemma}[theorem]{Lemma}
\newtheorem{proposition}[theorem]{Proposition}

\theoremstyle{definition}

\newtheorem{remark}[theorem]{Remark}
\newtheorem{assumption}[theorem]{Assumption}

\usepackage{enumitem}

\usepackage{tikz}
\usetikzlibrary{shapes.misc}
\usepackage{float}

\usepackage{algorithm}%
\usepackage{algorithmic}%

\newcommand{\minimize}[2]{\ensuremath{\underset
		{\substack{{#1}}}%
		{\text{\rm minimize}}\;\;#2 }}

\newcommand{\OAP}{ \mathcal{O}}

\usepackage{enumitem}

\title[Global solutions for the sensors placement problem]
{Global solutions for the sensors placement problem via weakly convex optimization}

\author{Giovanni Bruccola${}^{a,b}$}

\address{${}^a$Systems Research Institute, Polish Academy of Sciences, Newelska 6, 01-447 Warsaw, Poland}

\address{${}^b$Space Research Centre, Polish Academy of Sciences, Bartycka 18A, 00-716 Warszawa, Poland}

\begin{document}

\maketitle

\begin{abstract}
We address the problem of optimally placing a limited number of sensors to reconstruct high-dimensional signals without knowledge of the underlying dynamics. The task is formulated as a nonconvex combinatorial optimisation problem and recast as a weakly convex constrained projection problem. This reformulation allows us to compute $\varepsilon$-global solutions using the Inexact Cutting Sphere algorithm.
We further propose the Inverse Cutting Sphere algorithm, which starts from any feasible heuristic solution and either improves it by a prescribed tolerance $\varepsilon$ or certifies its $\varepsilon$-global optimality. 
The framework is evaluated on pressure reconstruction for NACA airfoils using XFOIL data. 
\end{abstract}

\begingroup
\footnotesize\noindent\textbf{2020 Mathematics Subject Classification.} 
90C26, 90C30, 93B07

\noindent\textbf{Keywords.} 
sensor placement, 
global optimization,
nonconvex optimization, 
weakly convex optimization,   
outer approximation
\par
\endgroup

		
\section{Introduction}

Given a test dataset $\mathcal{S}$ composed of signals $\omega_s \in \mathbb{R}^m$, $s = 1, \dots, S$, our goal is to determine the optimal positions of $p \ll m$ sensors and to reconstruct the full signals $\omega_s \in \mathbb{R}^m$ from the corresponding sensor outputs $y_s \in \mathbb{R}^p$.

We do not know the underlying physical laws that govern the behaviour of these signals. In other words, we cannot formulate a linear time-dependent dynamical system capable of modelling their evolution. However, we have access to a training dataset $\mathcal{T}$ consisting of $t$ snapshots. Consequently, $\mathcal{T}$ is a matrix in $\mathbb{R}^{t \times m}$.

Given a subset of $p$ sensor positions $I_p \in \mathcal{K}$ (where $\mathcal{K}$ denotes the collection of all possible $p$-element subsets of the $m$ candidate locations), the measurements obtained from the installed sensors for a signal $\omega_s$ ($s = 1, \dots, S$) are modelled as
\[
y_s = c_s^\top x + v_s, \quad i \in I_p,
\]
where $c_s \in \mathbb{R}^{p \times m}$, $x \in \mathbb{R}^m$, and $v_s \in \mathbb{R}^p$ represents measurement noise.

The main contributions of the paper are as follows:

\begin{enumerate}
    \item We model the sensor placement problem as weakly convex optimisation problems by reformulating the most discussed approaches in the literature, in particular the ones proposed in \cite{joshi2008sensor, singh2005determining}.
    \item To the best of our knowledge, we are the first to propose global optimisation algorithms for the sensor placement problem by means of the Inexact Cutting Sphere algorithm \cite{bednarczuk2026outer}, which finds a global $\varepsilon$-solution, and the Inverse Cutting Sphere algorithm, that can improve known feasible solutions (e.g., those obtained by heuristic methods).
    \item We compare the proposed global optimization algorithms with established heuristic approaches from the literature, in particular \cite{manohar2017data}.
\end{enumerate}

Sensor placement has been studied extensively for several decades (see, for example, the survey \cite{padula1999optimization}). When the underlying process can be described by a linear time-dependent dynamical system, the classical approach consists in placing sensors so as to maximise a suitable measure of observability. A system is said to be observable if the current state can be reconstructed from the sensor measurements alone for every possible evolution of the state and control inputs \cite{kalman1970lectures}. Observability is therefore a binary (on/off) property. Nevertheless, various quantitative measures of observability and associated optimisation techniques have been proposed in the literature (see, e.g., \cite{singh2005determining,polyak2013lmi,zhang2016optimal,zhang2017sensor,gagliardi2023traffic,gagliardi2024joint}).

When the system is nonlinear or its governing equations are unknown, but simulation data are available, one can construct an observability Gramian and minimise an appropriate observability measure (see, e.g., \cite{singh2005determining,krener2009measures,hinson2014observability,cellini2023empirical}). Constructing the Gramian requires the ability to simulate the signal $\omega(t)$ under perturbations of the initial conditions, as described in the aforementioned references.

In the present setting we are in a situation analogous to that considered in \cite{manohar2017data}, where the available snapshot dataset is not necessarily generated by perturbing initial conditions. The QDEIM method, originally introduced in \cite{drmac2016new} and further analysed in \cite{manohar2017data}, proceeds in two steps:
\begin{enumerate}
    \item A tailored basis is constructed, for instance via proper orthogonal decomposition (POD). This amounts to computing the singular value decomposition (SVD) of the snapshot matrix $\mathcal{T}$.
    \item All $m$ candidate sensor locations are ranked by means of a QR decomposition with column pivoting, and the first $p$ positions are retained.
\end{enumerate}

In \cite{manohar2017data}, the reconstruction quality obtained with sensors selected by QDEIM is compared with that achieved by compressed-sensing techniques (see, e.g., \cite{candes2006near,candes2006compressive}).

In \cite{Joshi2009}, the sensor placement problem is formulated as a non-convex optimisation problem (detailed in Subsection~\ref{sec: models}). The authors propose solving a convex relaxation of the original non-convex formulation. Our approach, by contrast, consists of the following three steps:
\begin{enumerate}
    \item A tailored basis is obtained via the POD method, exactly as in \cite{manohar2017data}.
    \item The non-convex sensor placement problem is formulated following the approach of \cite{Joshi2009}.
    \item A globally optimal (or $\varepsilon$-globally optimal) solution of the non-convex problem is computed by recasting it as a weakly convex optimisation problem and applying \cite[Algorithm 3]{bednarczuk2026outer}.
\end{enumerate}

While this work show how to modify the DEIM method by introducing global optimization algorithms, different papers propose to modify the DEIM method with Bayesian estimation, see \textit{e.g.} \cite{barklage2024sensor, jiang2026optimization}. The result of this modification is to move the sensor placement problem from the deterministic realm into the non-deterministic one. The deterministic and non-deterministic approaches are compared in \cite{kakasenko2026bridging}. 
In the present paper, however, we focus on the deterministic case. Combining Bayesian estimation and our global optimization approach may be the topic of future investigations.

The remainder of the paper is organised as follows. In Section~\ref{sec: not} we introduce the necessary mathematical background. Section~\ref{sec: full procedure} describes the complete procedure proposed for reconstructing the signals $\omega_s$, $s = 1, \dots, S$. In Section~\ref{sec: models} we review the most common optimisation models for sensor placement that appear in the literature. Section~\ref{sec: reform} reformulates these models as weakly convex constrained projection problems (see Problem~\ref{prob: proj}); this is a key step towards obtaining $\varepsilon$-global solutions via the Inexact Cutting Sphere algorithm \cite[Algorithm 3]{bednarczuk2026outer}. In Section~\ref{sec: inv} we present the Inverse Cutting Sphere algorithm, which can improve a given heuristic solution or certify its $\varepsilon$-global optimality. Finally, Section~\ref{sec: exp} reports numerical experiments.

		\section{Notation, preliminaries and subdifferentials}
\label{sec: not}

We denote by $S^n$, $S_+^n$ and $S_{++}^n$ the space of symmetric $n\times n$ matrices and the cones of symmetric positive semidefinite and positive definite matrices, respectively. We denote by $\operatorname{trace}(\cdot,\cdot)$ the inner product on $S_+^n$ defined by
\[
\operatorname{trace}(A,B) = \sum_{i=1}^n \sum_{j=1}^n A_{ij} B_{ij},
\]
where $A,B\in S_+^n$ and $A_{ij}$, $B_{ij}$ are the entries of $A$ and $B$ in position $(i,j)$. In general, we write $\langle \cdot,\cdot\rangle$ for the inner product on a Hilbert space $H$ and $\|\cdot\|$ for the associated norm (Euclidean norm when $H=\mathbb{R}^n$). We use \emph{lsc} as an abbreviation for lower semi-continuous.

Given a Hilbert space $H$, the convex (Moreau) subdifferential of a proper function $f:H\to (-\infty,+\infty]$ at a point $x\in H$ is defined as \cite[Definition 16.1]{bauschke2017convex}
\begin{equation}
\partial f(x) := \bigl\{ v\in H \ \big|\  f(y)-f(x) \ge \langle v,y-x\rangle \ \ \forall\, y\in H \bigr\}.
\end{equation}

Let $\lambda_{\max}(\cdot)$ and $\lambda_{\min}(\cdot)$ are the functions returning the largest and smallest eigenvalues of a matrix $A\in S^n$, respectively. If $v$ is a leading eigenvector of $A$, i.e., $\lambda_{\max}(A)v=Av$, then for every $B\in S^n$ we have
\[
\operatorname{trace}(vv^T,B-A) = v^TBv - \lambda_{\max}(A) \le \lambda_{\max}(B)-\lambda_{\max}(A).
\]
Consequently, $vv^T\in\partial\lambda_{\max}(A)$. Similarly, if $u$ is an eigenvector of $A$ corresponding to its smallest eigenvalue, then $-uu^T\in\partial(-\lambda_{\min})(A)$.

We define the condition number of a matrix $A\in S_{++}^n$ by
\[
\kappa(A) := \frac{\lambda_{\max}(A)}{\lambda_{\min}(A)}.
\]

Following \cite[equation 1.1.7 (sup)]{pallaschke2013foundations}, a function $f$ is said to be \emph{abstract convex} (or $\Phi$-convex) with respect to a class of elementary functions $\Phi$ if, for every $x\in H$,
\[
f(x) = \sup\bigl\{\varphi(x)\ \big|\ \varphi\in\Phi\bigr\}.
\]
A function $\varphi\in\Phi$ is said to belong to the $\Phi$-subdifferential of $f$ at $x\in H$ if
\begin{equation}
f(y)-f(x) \ge \varphi(y)-\varphi(x) \qquad \forall\, y\in H.
\end{equation}

The class $\Phi_{\mathrm{lsc}}$ of elementary functions is defined by
\begin{equation}
\label{def: class psi}
\Phi_{\mathrm{lsc}} := \bigl\{ \varphi:H\to\mathbb{R} \ \big|\ 
\varphi(u) = -a\|u\|^2 + \langle b,u\rangle + c,\ 
b\in H,\ a\in\mathbb{R}_{++},\ c\in\mathbb{R} \bigr\}.
\end{equation}

A function $f$ is called \emph{$\rho$-weakly convex} (for $\rho\ge 0$) if $f + \rho\|\cdot\|^2$ is convex. Every $\rho$-weakly convex function is $\Phi_{\mathrm{lsc}}$-convex. Indeed, if $\tilde{f}=f + a\|\cdot\|^2$ with $a\ge\rho$ is convex and $b\in\partial\tilde{f}(x)$, then the function
\[
\varphi_{(a,b)}(y) := -a\|y\|^2 + \langle b,y\rangle
\]
belongs to the $\Phi_{\mathrm{lsc}}$-subdifferential of $f$ at $x$, i.e., $(a,b)\in\partial_{\mathrm{lsc}}f(x)$.
		
		Finally, given a point $\overline{x}$, a constraints set $\mathcal{A}$ and an objective function $f$, we say that $\overline{x}$ is a global $\varepsilon$-solution, $\varepsilon>0$, if the following holds.
		\begin{equation}
		    \label{eps_glob}
		    (\overline{x}\in \mathcal{A}), \ \ f(\overline{x}) \le \min_{x\in \mathcal{A}} f(x)+\varepsilon.
		\end{equation}
		
	\section{Sensors placement procedure for reconstruction}
\label{sec: full procedure}

Inspired by \cite{manohar2017data}, we propose a \emph{training procedure} that takes the matrix $\mathcal{T}$ as input and returns the optimal positions of $p$ sensors together with two matrices $A_p \in \mathbb{R}^{p \times p}$ and $A \in \mathbb{R}^{p \times m}$, which are described below. Given the sensor outputs $y_s \in \mathbb{R}^p$ for $s \in \{1, \dots, S\}$, the reconstructed signals are then obtained as $\overline{\omega}_s = A c$, where $c \in \mathbb{R}^p$ is the least-squares solution of the linear system
\begin{equation}
\label{eq: least square}
A_p c = y_s.
\end{equation}

When the sensor outputs are corrupted by noise, the signals $\omega_s$ can be reconstructed following the approach described in \cite{Karnik2026}.

Our training procedure consists of two steps:
\begin{enumerate}
    \item Compute the matrix $A$ via \emph{proper orthogonal decomposition} (POD) applied to $\mathcal{T}$, as described in \cite{manohar2017data}. 
    In particular, we find $A$ by taking the first $p$ linearly independent eigenvectors of one of the matrices found by the SVD decomposition, described in \cite[equation 13]{manohar2017data}.
    \item Determine the optimal positions of the $p$ sensors among all $m$ candidate locations by solving one of the nonconvex sensor selection problems introduced in the next section to $\varepsilon$-global optimality. Equivalently, this amounts to selecting the $p$ columns of $A \in \mathbb{R}^{p \times m}$ that form the matrix $A_p \in \mathbb{R}^{p \times p}$.
\end{enumerate}

To compute an inexact global solution to the sensor selection problems, we employ the Inexact Cutting Sphere algorithm \cite[Algorithm 3]{bednarczuk2026outer}. To the best of our knowledge, this is the first time that an inexact global solution for sensor selection problems is obtained in this manner.
Then, we also propose a new algorithm, the Inverse Cutting Sphere algorithm (Algorithm \ref{alg: inv inexact}) to improve a known feasible solution.
The remainder of the literature relies instead on convex relaxations or heuristic methods.	
		
	\section{Sensors placement as an optimization problem}
\label{sec: models}

We begin this section with the sensor placement problem introduced in \cite{Joshi2009}, which differs slightly from the formulation presented in the introduction. Our objective is to estimate a vector $x \in \mathbb{R}^n$ from $m$ possible linear measurements of the form
\begin{equation}
y_i = a_i^\top x + v_i, \qquad i = 1, \dots, m,
\end{equation}
where the noise terms $v_1, \dots, v_m$ are independent random variables distributed as $\mathcal{N}(0,1)$.

Interpreting $m$ as the number of candidate sensor locations, we aim to select an optimal subset of $p \ge n$ sensors. To this end, we introduce binary decision variables $z_i \in \{0,1\}$, $i = 1, \dots, m$, where $z_i = 1$ if and only if location $i$ is selected.

It was shown in \cite{Joshi2009} that the following optimization problem minimizes the volume of the confidence ellipsoid associated with the estimation error:
\begin{equation}
\tag{SL1}
\label{prob: sensor placement 1}
\begin{aligned}
\max_z \quad & \log \det \Bigl( \sum_{i=1}^m z_i a_i a_i^\top + \delta I \Bigr) \\
\text{s.t.} \quad & \mathbf{1}^\top z = p, \\
& z_i \in \{0,1\}, \quad i = 1, \dots, m.
\end{aligned}
\end{equation}
The regularization term $\delta I$ with $\delta > 0$ was added by us. Note that, once a set of indices $I \subset \{1, \dots, m\}$ with $|I| = p$ has been selected, the measurement model takes the form $y = Ax + v$, where the rows of the matrix $A \in \mathbb{R}^{p \times n}$ are the vectors $a_i$ for $i \in I$, and the components of the noise vector $v \in \mathbb{R}^p$ are the corresponding $v_i$.

An alternative formulation, also proposed in \cite{Joshi2009}, is given by
\begin{equation}
\tag{SL2}
\label{prob: sensor placement 2}
\begin{aligned}
\min_z \quad & \operatorname{trace} \Bigl( \sum_{i=1}^m z_i a_i a_i^\top + \delta I \Bigr)^{-1} \\
\text{s.t.} \quad & \mathbf{1}^\top z = p, \\
& z_i \in \{0,1\}, \quad i = 1, \dots, m.
\end{aligned}
\end{equation}

Following the approach of \cite{singh2005determining,krener2009measures,hinson2014observability}, we may replace the $\log\det$ objective in \eqref{prob: sensor placement 1} by the matrix condition number function $\kappa : S_{++}^n \to \mathbb{R}$, defined by
\[
\kappa(A) := \frac{\lambda_{\max}(A)}{\lambda_{\min}(A)},
\]
where $\lambda_{\max}(\cdot)$ and $\lambda_{\min}(\cdot)$ denote the largest and smallest eigenvalues, respectively. This yields the problem
\begin{equation}
\tag{SL3}
\label{prob: sensor placement 3}
\begin{aligned}
\min_z \quad & \kappa\Bigl( \sum_{i=1}^m z_i a_i a_i^\top + \delta I \Bigr) \\
\text{s.t.} \quad & \mathbf{1}^\top z = p, \\
& z_i \in \{0,1\}, \quad i = 1, \dots, m.
\end{aligned}
\end{equation}

Problem \eqref{prob: sensor placement 3} differs substantially from \eqref{prob: sensor placement 1} and \eqref{prob: sensor placement 2}, since the condition number $\kappa$ is a nonconvex function.

\section{Global solutions algorithm for the sensors placement problems}
\label{sec: reform}

\subsection{Motivation for the cutting sphere method}
\label{chap: reformulation}

The focus of this section is to describe an algorithm that solves problems \eqref{prob: sensor placement 1} and \eqref{prob: sensor placement 2} to global optimality. To this end, we reformulate these two problems as weakly convex constrained problems and apply the cutting sphere algorithm with warm restart \cite[Algorithm 3]{bednarczuk2026outer}.

Algorithm \cite[Algorithm 3]{bednarczuk2026outer} solves,  to $\varepsilon$-global optimality, nonconvex projection problems of the form
\begin{equation}
\label{prob: proj}
\tag{NPP}
\begin{split}
&\operatorname{min}_{x\in P}\, \|x\|^2\\
&\text{s.t. }x\in \mathcal{A},\\
&\text{where }\mathcal{A}:=\{x\in P\ |\ f_i(x)\le 0,\ i=1,...,m\},
\end{split}
\end{equation}
    where $f_i:\mathbb{R}^n\rightarrow (-\infty, +\infty]$, $i=1,...,m$, are proper, lower semicontinuous and $\rho_i$-weakly convex functions. The set $P$ is a polyhedron (it was simply $\mathbb{R}^n$ in \cite[Equation 1]{bednarczuk2026outer}). Moreover, $\mathcal{A}\neq \varnothing$ and there exists $a\ge 0$ with ${a>\operatorname{Opt}(\eqref{prob: proj})}$ such that the functions $f_i$, $i=1,...,m$, are continuous on an open set containing the level set $\operatorname{lev}_{\le a}\|x\|^2$.
 
For the sensor placement problems, we have, for some $m\le n$,    \begin{equation}
\label{eq:poly}
P := \bigl\{ x=(x^{n-m}, x^{m}) \in \mathbb{R}^n \ \big|\ 
(\mathbf{1},0)^T x^{m} = p;\quad 0 \le x_i \le 1,\ i=1,...,m \bigr\}.
\end{equation}

We will therefore obtain global $\varepsilon$-solutions for the reformulations of problems \eqref{prob: sensor placement 1}, \eqref{prob: sensor placement 2} and \eqref{prob: sensor placement 3}. To the best of our knowledge, this constitutes a new result in the literature on sensor placement problems.

Reformulating problems \eqref{prob: sensor placement 1} and \eqref{prob: sensor placement 2} in the form of \eqref{prob: proj} is possible thanks to the following theorem.

\begin{theorem}\cite[Theorem 1]{bednarczuk2026outer}
\label{th: minnoncon}
Assume that Problem
\begin{equation}
\label{prob: proj_pre}
\begin{split}
&\operatorname{min}_{z\in \mathbb{R}^{n-1}}\, F(z)\\
&z\in \overline{\mathcal{A}},
\end{split}
\end{equation}
has a solution, where $\overline{\mathcal{A}}$ is some closed set (for the sensor placement problems, $\overline{\mathcal{A}}$ is intersected with $P$ in \eqref{eq:poly}). Let $(\rho,\eta)\in\, ]0,+\infty[ \times \mathbb{R}$ be such that
\begin{equation}
\label{set: D}
D = \left\{\widehat{z}\in \arg\min_{z\in S}F(z) \mid F(\widehat{z}) + \eta \ge \frac{\rho}{2}\|\widehat{z}\|^2\right\}\neq\varnothing.
\end{equation}
For $y = (\overline{y}, y_{n}) \in \mathbb{R}^{n-1}\times \mathbb{R}$, let
\begin{equation}
\label{eq: new constr form}
\mathcal{A}:=
\left\{y\in \mathbb{R}^n \ |\ \begin{cases}
F(\overline{y}) + \eta - \frac{\rho}{2}\|y\|^2\le 0; \\
\overline{y}\in \overline{\mathcal{A}}
\end{cases}\right\}.
\end{equation}
Then $\widehat{x} \in D$ if and only if there exists $\widehat{y}_{n}\in \mathbb{R}$ such that $\widehat{y} = (\widehat{x}, \widehat{y}_{n})$ is a solution to
\begin{equation}
\label{problem2}
\minimize{y\in\mathbb{R}^{n}}\,\|y\|^2\quad \text{s.t.}\quad y\in \mathcal{A}.
\end{equation}
\end{theorem}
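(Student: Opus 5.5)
The plan is to eliminate the auxiliary coordinate $y_n$ and reduce \eqref{problem2} to a problem in $\overline{y}$ alone, whose minimizers can be compared directly with $D$. Throughout I read the set $S$ in \eqref{set: D} as the feasible set $\overline{\mathcal{A}}$ of \eqref{prob: proj_pre}; the argument relies on this identification. Write $F^\ast:=\min_{z\in\overline{\mathcal{A}}}F(z)$, which exists by assumption.

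\emph{Step 1 (partial minimization in $y_n$).} For $y=(\overline{y},y_n)$ we have $\|y\|^2=\|\overline{y}\|^2+y_n^2$. The first constraint in \eqref{eq: new constr form} is therefore equivalent to
\[
y_n^2\ \ge\ \tfrac{2}{\rho}\bigl(F(\overline{y})+\eta\bigr)-\|\overline{y}\|^2 .
\]
Fix $\overline{y}\in\overline{\mathcal{A}}$ and minimize $\|y\|^2$ over admissible $y_n$. The smallest admissible $y_n^2$ is $\max\{0,\tfrac{2}{\rho}(F(\overline{y})+\eta)-\|\overline{y}\|^2\}$. Hence the infimum over $y_n$ is attained and equals
\[
g(\overline{y}):=\max\Bigl\{\|\overline{y}\|^2,\ \tfrac{2}{\rho}\bigl(F(\overline{y})+\eta\bigr)\Bigr\}.
\]
Moreover, a feasible point $(\overline{y},y_n)$ solves \eqref{problem2} if and only if $\overline{y}$ minimizes $g$ over $\overline{\mathcal{A}}$ and $\|y\|^2=g(\overline{y})$.

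\emph{Step 2 (optimal value).} For every $\overline{y}\in\overline{\mathcal{A}}$ we have $g(\overline{y})\ge \tfrac{2}{\rho}(F(\overline{y})+\eta)\ge \tfrac{2}{\rho}(F^\ast+\eta)=:V$. Now take any $\widehat{x}\in D$, which exists since $D\neq\varnothing$. Then $F(\widehat{x})=F^\ast$, and the defining inequality of $D$ gives $\|\widehat{x}\|^2\le V$, so $g(\widehat{x})=V$. Hence $\min_{\overline{\mathcal{A}}} g=V$, and this is the optimal value of \eqref{problem2}.

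\emph{Step 3 (the equivalence).} ($\Rightarrow$) For $\widehat{x}\in D$, choose $\widehat{y}_n=\sqrt{V-\|\widehat{x}\|^2}$, which is real by Step 2. Then $(\widehat{x},\widehat{y}_n)$ is feasible, since the constraint holds with equality, and its norm squared is $V$, so it is optimal.

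($\Leftarrow$) Suppose $\widehat{y}=(\widehat{x},\widehat{y}_n)$ solves \eqref{problem2}. Then $\widehat{x}\in\overline{\mathcal{A}}$ and $g(\widehat{x})\le\|\widehat{y}\|^2=V$. From $\tfrac{2}{\rho}(F(\widehat{x})+\eta)\le V$ we get $F(\widehat{x})\le F^\ast$, so $\widehat{x}$ is a minimizer of $F$ on $\overline{\mathcal{A}}$ and $F(\widehat{x})=F^\ast$. Then $\|\widehat{x}\|^2\le g(\widehat{x})\le V=\tfrac{2}{\rho}(F(\widehat{x})+\eta)$, which is exactly the defining inequality of $D$. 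Hence $\widehat{x}\in D$.

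The only delicate point is Step 2: the nonemptiness of $D$ is precisely what makes the lower bound $V$ attained. Without it, the minimum of $g$ could be governed by the $\|\overline{y}\|^2$ branch of the maximum and exceed $V$, and the converse direction would fail. Everything else is elementary bookkeeping of the two branches of the maximum.
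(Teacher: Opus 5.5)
Your proof is correct and complete. The paper gives no proof of this statement: it says the proof follows by repeating the steps of \cite[Theorem 1]{bednarczuk2026outer} and omits it, so there is no in-paper argument to compare against.

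Your route is clean. You eliminate $y_n$ to obtain $g(\overline{y})=\max\{\|\overline{y}\|^2,\tfrac{2}{\rho}(F(\overline{y})+\eta)\}$. You then show that $\min_{\overline{\mathcal{A}}} g=\tfrac{2}{\rho}(F^\ast+\eta)$, which holds exactly because $D\neq\varnothing$. This isolates the one inequality that matters: every feasible $y$ satisfies $\|y\|^2\ge\tfrac{2}{\rho}(F(\overline{y})+\eta)\ge\tfrac{2}{\rho}(F^\ast+\eta)$, and this bound is attained exactly when $\overline{y}\in D$.

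Three minor remarks:
\begin{itemize}
\item Reading the undefined set $S$ in the definition of $D$ as $\overline{\mathcal{A}}$ is the right interpretation; otherwise the statement is not well posed.
\item If $F$ may take the value $+\infty$ (as $-\log\det$ does off the positive definite cone), then for such $\overline{y}$ there is no admissible $y_n$. In Step 1, ``the infimum is attained'' should then be replaced by the convention $g(\overline{y})=+\infty$. This does not affect any later step.
\item Neither the closedness of $\overline{\mathcal{A}}$ nor the separate assumption that a minimizer exists is used. Existence is already implied by $\varnothing\neq D\subseteq\arg\min_{\overline{\mathcal{A}}}F$.
\end{itemize}
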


Theorem \ref{th: minnoncon} is formulated in a more general form than \cite[Theorem 1]{bednarczuk2026outer}. Its proof can be obtained by repeating all the steps of the proof of \cite[Theorem 1]{bednarczuk2026outer} and is therefore omitted.

Note that \cite[Algorithm 3]{bednarczuk2026outer} is computationally demanding. Our aim is therefore to apply it to small instances of problems \eqref{prob: sensor placement 1}, \eqref{prob: sensor placement 2} and \eqref{prob: sensor placement 3} and to compare the results with the most established methods in the literature, in particular those of \cite{Joshi2009, manohar2017data}. These methods are heuristics and provide no optimality guarantees. The comparison will allow us to assess how close the solutions they produce are to the global solution, at least in the context of the experiments presented below.

\subsection{Problem SL1 reformulation for the cutting sphere}

In order to apply \cite[Algorithm 3]{bednarczuk2026outer} to Problem \eqref{prob: sensor placement 1}, we reformulate it as a weakly convex constrained problem whose objective is the squared Euclidean norm.

In the following, let $z\in \mathbb{R}^m$ (we will see later why this is not a restrictive assumption compared with $z\in \{0,1\}^m$). The objective function $\max_z \log\det\bigl(\sum_{i=1}^m z_i a_i a_i^T + \delta I\bigr)$ can be rewritten as
\[
f(z) := \min_{z\in \mathbb{R}^n} -\log\det\Bigl( \sum_{i=1}^m z_i a_i a_i^T + \delta I \Bigr).
\]
Let $A_i = a_i a_i^T$ for $i=1,...,m$ and define $A(z) = \sum_{i=1}^m A_i z_i + \delta I$. In our setting, the vectors $a_i\in \mathbb{R}^n$, $i=1,...,m$ are assumed to be linearly independent, and we select exactly $p = n$ of the variables $z_i$ to be equal to one. Note that $\det\bigl(\sum_{i=1}^m z_i a_i a_i^T + \delta I\bigr) > 0$.

It is well known that $-\log\det(\cdot)$ is convex on the cone of symmetric positive definite matrices. Consequently, $-\log\det(A(z))$ is convex, and
\[
-\nabla_z\log\det\Bigl( \sum_{i=1}^m z_i A_i \Bigr) = -\operatorname{tr}(A_i A^{-1}(z)), \quad i=1,...,m,
\]
see \cite[Example A.3]{Joshi2009}. We now state the following result.

\begin{proposition}
 Problem \eqref{prob: sensor placement 1} can be rewritten in the variable $x:=(z_1,...,z_m, x_{m+1})\in \mathbb{R}^{m+1}$ as
\begin{equation}
\label{prob: SL final}
\tag{SL1.2}
\begin{split}
& \min_{x \in \mathbb{R}^{n+1}} \|x\|^2 \\
& f_\Omega(x)\le 0;\quad (\mathbf{1},0)^T x = p;\quad g(x)\le 0, \\
& 0\le x_i \le 1,\quad i=1,...,n.
\end{split}
\end{equation}
where $\Omega$ is the optimal value of the auxiliary problem \eqref{prob: aux} below, $\eta := \Omega - p^2$,
\[
f_\Omega(x) := -\log\det\Bigl( \sum_{i=1}^m z_i A_i + \delta I \Bigr) - \eta - \|x\|^2,
\]
and
\[
g(x) = g(z) := \sum_{i=1}^m \Bigl| \Bigl(z_i - \tfrac12\Bigr)^2 - \tfrac14 \Bigr| \le 0.
\]
\end{proposition}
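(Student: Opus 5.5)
The plan is to obtain \eqref{prob: SL final} from Theorem~\ref{th: minnoncon}, applied with $F(z) := -\log\det A(z)$ and with $\overline{\mathcal{A}}$ equal to the feasible set of \eqref{prob: sensor placement 1}. The argument has three steps, taken in order.

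\textbf{Step 1: encode the binary constraint.} The continuous constraint $g(z)\le 0$ together with the box $0\le z_i\le 1$ is equivalent to $z\in\{0,1\}^m$. Each summand $\bigl|(z_i-\tfrac12)^2-\tfrac14\bigr|$ is nonnegative, so $g(z)\le 0$ forces every summand to vanish. That means $(z_i-\tfrac12)^2=\tfrac14$, i.e. $z_i\in\{0,1\}$. Conversely, every binary $z$ gives $g(z)=0$. Hence
\[
\overline{\mathcal{A}} := \{z\in\mathbb{R}^m \mid \mathbf{1}^T z = p,\ g(z)\le 0\}
\]
is exactly the feasible set of \eqref{prob: sensor placement 1} and is intersected with $P$ in \eqref{eq:poly}. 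This justifies relaxing $z\in\{0,1\}^m$ to $z\in\mathbb{R}^m$. The set is finite, hence closed and nonempty, and since $\delta>0$ keeps $A(z)\in S^n_{++}$, the problem $\min_{z\in\overline{\mathcal{A}}}F(z)$ has a solution. Maximizing $\log\det$ is the same as minimizing $F$, so the solution sets coincide.

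\textbf{Step 2: choose $(\rho,\eta)$ so that $D\neq\varnothing$.} I take $\rho=2$, so that $\frac{\rho}{2}\|y\|^2=\|y\|^2$. Every feasible $z$ is binary with exactly $p$ ones, so $\|z\|^2=p$ is constant on $\overline{\mathcal{A}}$ (any bound such as $p^2\ge p$ also works). The condition defining $D$ in \eqref{set: D} is $F(\widehat z)+\eta\ge\|\widehat z\|^2$. With $\Omega$ the optimal value of the auxiliary problem, i.e.\ $\Omega=\min_{\overline{\mathcal{A}}}F$, we have $F(\widehat z)=\Omega$ at any minimizer. Choosing $\eta$ so that $\Omega+\eta\ge p$ then gives $D=\arg\min F\neq\varnothing$. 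The paper's choice $\eta=\Omega-p^2$ gives $\Omega+\eta=2\Omega-p^2$, so I would have to check the sign convention carefully, writing $\eta=p^2-\Omega$ if needed, or whichever form makes $F+\eta-\|y\|^2$ match $f_\Omega$. This bookkeeping of the sign of $\eta$ relative to $\Omega$ is the step I expect to be most delicate, and the statement may need adjusting.

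\textbf{Step 3: identify the lifted problem.} With this choice, the set \eqref{eq: new constr form} reads
\[
\{x=(z,x_{m+1}) \mid F(z)+\eta-\|x\|^2\le 0,\ z\in\overline{\mathcal{A}}\}.
\]
Its first constraint is $f_\Omega(x)\le 0$, and the remaining constraints are $(\mathbf 1,0)^Tx=p$, $g(x)\le 0$ and the box. Theorem~\ref{th: minnoncon} then says that minimizers of \eqref{prob: sensor placement 1} are exactly the first $m$ components of minimizers of $\|x\|^2$ over this set, which is the claimed equivalence.

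Finally, to make \eqref{prob: SL final} fit the form \eqref{prob: proj}, I would check weak convexity and continuity of the constraints:
\begin{itemize}
\item $f_\Omega$ is convex plus $-\|x\|^2$, hence $2$-weakly convex, and it is continuous wherever $A(z)\succ 0$, which includes the box.
\item $g$ is a sum of absolute values of quadratics $h_i(z_i)=(z_i-\tfrac12)^2-\tfrac14$ with bounded second derivative. On the bounded box it is weakly convex: write $|h_i|=\max(h_i,-h_i)$, where $-h_i$ has second derivative $-2$, so adding $\|z\|^2$ makes each piece convex.
\end{itemize}
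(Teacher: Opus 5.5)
Your outline matches the paper's: encode $z\in\{0,1\}^m$ by $g\le 0$, apply Theorem~\ref{th: minnoncon} with $\rho=2$, and use $\|z\|^2=p\le p^2$ on the feasible set. However, the one non-trivial step, verifying $D\neq\varnothing$ in Step 2, is left open and rests on a wrong identification.

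The auxiliary problem \eqref{prob: aux} is the \emph{continuous relaxation}: minimize $F$ over $\{\mathbf 1^Tz=p,\ 0\le z_i\le 1\}$. It is not the combinatorial problem itself. So $F(\widehat z)=\Omega$ is false in general. What you do have is $F(\widehat z)\ge\Omega$ for every minimizer $\widehat z$ of \eqref{prob: sensor placement 1}, because the relaxation's feasible set contains $\overline{\mathcal A}$. This inequality points in the right direction and is all that is needed. It is also the reason for the construction: $\Omega$ comes from a convex problem, so the reformulation does not presuppose knowing the optimal value of \eqref{prob: sensor placement 1}.

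The sign puzzle goes away once you notice that $f_\Omega=F-\eta-\|x\|^2$ carries $-\eta$, not $+\eta$. Matching $f_\Omega$ with \eqref{eq: new constr form} means Theorem~\ref{th: minnoncon} is applied with $\eta_{\mathrm{Th}}:=-\eta=p^2-\Omega$. The condition in \eqref{set: D} then reads $F(\widehat z)+p^2-\Omega\ge\|\widehat z\|^2=p$. This follows from $F(\widehat z)\ge\Omega$ together with $p^2\ge p$, and it is exactly the chain \eqref{eq: chain Pesquet}.

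Since the condition holds for every minimizer, $D$ is the whole argmin and your Step 3 goes through unchanged. The statement needs no adjustment. Your remark that $p$ could replace $p^2$ is the alternative reformulation the paper mentions.
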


\begin{proof}
The binary constraints on the variables $z_i$, $i=1,...,m$ can be equivalently written as
\[
g(x) = g(z) := \sum_{i=1}^m \Bigl| \Bigl(z_i - \tfrac12\Bigr)^2 - \tfrac14 \Bigr| \le 0.
\]
The function $g$ is proper, lower semicontinuous and weakly convex on $\mathbb{R}^n$ (see \cite{bednarczuk2023forward}).

Consider the auxiliary convex optimization problem
\begin{equation}
\label{prob: aux}
\begin{split}
& \min_z f(z) \\
& \mathbf{1}^T z = p, \quad 0\le z_i\le 1,\quad i=1,...,m.
\end{split}
\end{equation}
Problem \eqref{prob: aux} is a continuous relaxation of \eqref{prob: sensor placement 1}, so its optimal value $\Omega$ can be computed efficiently. Let $z^*$ be a global minimizer of \eqref{prob: sensor placement 1}. Then
\begin{equation}
\label{eq: chain Pesquet}
f(z^*) \ge \Omega = \Omega + \|z^*\|^2 - \|z^*\|^2 \ge \Omega + \|z^*\|^2 - p^2 = \eta + \|z^*\|^2.
\end{equation}
The first inequality holds because \eqref{prob: aux} is a relaxation of \eqref{prob: sensor placement 1}. The second inequality follows from $z^*\in\{0,1\}^m$ and $\mathbf{1}^T z = p\ge 1$.
Therefore, we have $-\|z^*\|^2=-p\ge -p^2$ (we have an alternative reformulation if we take $p$ instead of $p^2$).
The last equality follows from the definition $\eta = \Omega - p^2$.

Since \eqref{eq: chain Pesquet} holds, we may apply \cite[Theorem 1]{bednarczuk2026outer}. Introducing the extended variable $x = (z_1,...,z_m, Z) \in \mathbb{R}^{n+1}$, we can rewrite problem \eqref{prob: sensor placement 1} as
\begin{equation}
\label{prob: aux 2}
\begin{split}
& \min_x \|x\|^2 \\
& f_\Omega(x)\le 0, \\
& \mathbf{1}^T z = p, \quad z_i\in\{0,1\},\quad i=1,...,m.
\end{split}
\end{equation}
Finally, \eqref{prob: sensor placement 1} is equivalent to \eqref{prob: SL final}.
\end{proof}

\subsection{Problem SL3 reformulation for the cutting sphere}

Reformulating problem \eqref{prob: sensor placement 3} is more involved because the condition number $\kappa$ is nonconvex. To apply \cite[Theorem 1]{bednarczuk2026outer}, we first consider the following equivalent reformulation.

\begin{lemma}
 Problem \eqref{prob: sensor placement 3} can be rewritten in the variable $x = (\alpha, \beta, z_1,...,z_m) \in \mathbb{R}^{m+3}$ as
\begin{equation}
\label{prob: SL3 not final}
\begin{split}
& \min_{x \in \mathbb{R}^{n+3}} \beta \\
& f_\kappa(\alpha, \beta, z_1,...,z_m)\le 0; \\
& \lambda_{\min}\Bigl( \sum_{i=1}^m z_i a_i a_i^T + \delta I \Bigr) \ge \alpha; \\
& (\mathbf{1},0)^T z = p;\quad g(x)\le 0, \\
& 0\le z_i \le 1,\quad i=1,...,m, \quad \alpha,\beta \ge \delta.
\end{split}
\end{equation}
where
\begin{equation}
\label{prob: functionss}
f_\kappa(\alpha, \beta, z_1,...,z_m) := -\alpha\beta + \lambda_{\max}\Bigl( \sum_{i=1}^m z_i a_i a_i^T + \delta I \Bigr).
\end{equation}
\end{lemma}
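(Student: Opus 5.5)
The plan is an epigraph-type argument. We show that the two problems have the same optimal value, and that their minimizers correspond via $z\mapsto (\alpha,\beta,z)$ with $\alpha=\lambda_{\min}(A(z))$ and $\beta=\kappa(A(z))$, where $A(z):=\sum_{i=1}^m z_i a_ia_i^T+\delta I$.

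First we handle the feasible set in $z$. As in the proof of the previous proposition, the constraints $g(z)\le 0$ and $0\le z_i\le 1$ are equivalent to $z\in\{0,1\}^m$. Indeed, $g$ is a sum of nonnegative terms, so $g(z)\le 0$ forces $(z_i-\tfrac12)^2=\tfrac14$, i.e. $z_i\in\{0,1\}$, for every $i$. Together with $\mathbf{1}^Tz=p$, the $z$-part of the feasible set of \eqref{prob: SL3 not final} is therefore exactly the feasible set of \eqref{prob: sensor placement 3}. For such $z$ we have $A(z)\succeq\delta I$, so $A(z)\in S^n_{++}$, $\lambda_{\min}(A(z))\ge\delta>0$, and $\kappa(A(z))$ is well defined and at least $1$.

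Next, fix a feasible $z$ and take $(\alpha,\beta)$ feasible for \eqref{prob: SL3 not final}. From $\alpha\le\lambda_{\min}(A(z))$ and $\alpha\ge\delta>0$, together with $f_\kappa\le0$, i.e. $\lambda_{\max}(A(z))\le\alpha\beta$, we get
\[
\beta\ \ge\ \frac{\lambda_{\max}(A(z))}{\alpha}\ \ge\ \frac{\lambda_{\max}(A(z))}{\lambda_{\min}(A(z))}=\kappa(A(z)).
\]
So every feasible point has objective value at least $\kappa(A(z))$. For the converse we choose $\alpha=\lambda_{\min}(A(z))$ and $\beta=\kappa(A(z))$. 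Then $f_\kappa=0$ and the $\lambda_{\min}$ constraint holds with equality, so this point is feasible and attains the bound.

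The step I expect to be the main obstacle is the side constraint $\beta\ge\delta$, since $\beta=\kappa(A(z))\ge1$ need not exceed $\delta$ when $\delta>1$. I would handle this either by assuming $\delta\le 1$ or by noting that the constraint is harmless whenever $\kappa\ge\delta$; otherwise it should be read as $\beta\ge1$. With that point settled, we obtain
\[
\min\eqref{prob: SL3 not final}=\min_z\kappa(A(z))=\min\eqref{prob: sensor placement 3},
\]
and the correspondence of minimizers follows. If $(\alpha^*,\beta^*,z^*)$ is optimal for \eqref{prob: SL3 not final}, then $z^*$ is optimal for \eqref{prob: sensor placement 3}. Conversely, if $z^*$ is optimal for \eqref{prob: sensor placement 3}, the lifted point $(\lambda_{\min}(A(z^*)),\kappa(A(z^*)),z^*)$ is optimal for \eqref{prob: SL3 not final}. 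Finiteness of the feasible set in $z$ guarantees that both minima are attained.
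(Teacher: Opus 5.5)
Your proposal is correct and follows essentially the same epigraph argument as the paper: you bound $\alpha\le\lambda_{\min}(A(z))$, introduce $\beta$ as an epigraph variable for $\lambda_{\max}(A(z))/\alpha$, and clear the denominator using $\alpha\ge\delta>0$, while also spelling out both the lower bound and its attainment and handling the binary constraint through $g$ explicitly. Your caveat about $\beta\ge\delta$ is a real defect that the paper's proof passes over silently: if $\delta>\min_z\kappa(A(z))$ (possible only when $\delta>1$), the reformulated problem has optimal value $\delta$ and admits spurious minimizers, so assuming $\delta\le 1$ (or replacing $\beta\ge\delta$ by $\beta\ge 1$) is exactly the fix needed.
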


\begin{proof}
Under the constraint $\alpha \le \lambda_{\min}\bigl(\sum z_i a_i a_i^T + \delta I\bigr)$, we have
\[
\min_z \frac{\lambda_{\max}}{\lambda_{\min}}\Bigl( \sum_{i=1}^m z_i a_i a_i^T + \delta I \Bigr) = \min_{z,\alpha\ge\delta} \frac{\lambda_{\max}\bigl(\sum z_i a_i a_i^T + \delta I\bigr)}{\alpha}.
\]
Introducing the auxiliary variable $\beta \ge \delta$, the problem becomes
\begin{equation}
\label{eq: beta}
\min_{z,(\alpha,\beta\ge\delta)} \beta \quad\text{s.t.}\quad \beta \ge \frac{\lambda_{\max}\bigl(\sum z_i a_i a_i^T + \delta I\bigr)}{\alpha}.
\end{equation}
Since $\alpha \ge \delta$, the inequality $\beta \ge \frac{\lambda_{\max}}{\alpha}$ is equivalent to
\[
\beta\alpha \ge \lambda_{\max}\Bigl( \sum_{i=1}^m z_i a_i a_i^T + \delta I \Bigr).
\]
\end{proof}

The function $f_\kappa(\alpha, \beta, z_1,...,z_m)$ is weakly convex: $\lambda_{\max}(\cdot)$ is convex and $-\alpha\beta$ is 2-weakly convex (because $-\alpha\beta + \|\alpha,\beta\|^2$ is convex). Consequently, $f_\kappa + \|\cdot\|^2$ is convex. Moreover, the function $\alpha - \lambda_{\min}(\sum z_i a_i a_i^T + \delta I)$ is convex. Therefore, problem \eqref{prob: SL3 not final} can be recast in a form suitable for the cutting sphere algorithm, as shown in the next lemma.

\begin{lemma}
Problem \eqref{prob: SL3 not final} can be rewritten in the variable $x := (\alpha, \beta, z_1,...,z_m, x_{m+1}) \in \mathbb{R}^{m+4}$ as
\begin{equation}
\label{prob: SL3 final}
\tag{SL3.2}
\begin{split}
& \min_{x \in \mathbb{R}^{n+3}} \|x\|^2 \\
& f_\kappa(x)\le 0;\quad f_{\mathrm{obj}}(x)\le 0; \\
& \lambda_{\min}\Bigl( \sum_{i=1}^m z_i a_i a_i^T + \delta I \Bigr) \ge \alpha; \\
& g(x)\le 0, \\
& \sum_{i=1}^m z_i = p;\quad 0\le z_i \le 1,\quad i=1,...,m, \quad \alpha,\beta \ge \delta.
\end{split}
\end{equation}
where, letting $\overline{\lambda}_{\min}$ be an upper bound on the minimum eigenvalue of $\bigl(\sum z_i a_i a_i^T + \delta I\bigr)$ under the constraints of \eqref{prob: sensor placement 3}, and setting $\eta = \overline{\lambda}_{\min}^2 + k$, we have
\[
f_{\mathrm{obj}} = \beta^2 + \eta - \|x\|^2.
\]
\end{lemma}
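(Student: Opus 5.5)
The plan is to apply Theorem~\ref{th: minnoncon}, exactly as in the proof of the SL1 reformulation, now to problem \eqref{prob: SL3 not final}. The closed set $\overline{\mathcal{A}}$ will be the feasible set of \eqref{prob: SL3 not final}: the constraints $f_\kappa\le 0$, $\lambda_{\min}(\cdot)\ge\alpha$, $g\le 0$, $\sum z_i=p$, $0\le z_i\le 1$ and $\alpha,\beta\ge\delta$. It is closed because $\lambda_{\max}$ and $\lambda_{\min}$ are continuous and $g$ is lsc. The objective is taken to be $F(\alpha,\beta,z)=\beta$.

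First I will check that \eqref{prob: SL3 not final} has a solution. For binary $z$, the matrix $\sum z_i a_ia_i^T+\delta I$ ranges over finitely many values. For each such $z$, the optimal choice is $\alpha=\lambda_{\min}$ and $\beta=\lambda_{\max}/\lambda_{\min}$. Hence the infimum is attained.

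Next comes the key inequality defining the set $D$ in \eqref{set: D}. We need some $\eta$ and some $\rho$ such that $F(\widehat z)+\eta\ge\frac{\rho}{2}\|\widehat z\|^2$ at a minimizer. Because $F=\beta$ is linear rather than of the form $\beta^2$, I will first replace the objective by $\beta^2$. This is harmless: $\beta\ge\delta>0$, so the minimizers of $\beta$ and of $\beta^2$ coincide. I then bound the norm of a minimizer $\widehat x=(\widehat\alpha,\widehat\beta,\widehat z)$, following the chain \eqref{eq: chain Pesquet}.

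There are three contributions to the norm. First, $\|\widehat z\|^2=p$, since $g\le 0$ forces binary entries. Second, $\widehat\alpha^2\le\overline{\lambda}_{\min}^2$, because $\widehat\alpha\le\lambda_{\min}(\cdot)\le\overline{\lambda}_{\min}$. Third, $\widehat\beta^2$ appears on both sides and cancels, since the full norm includes the $\beta^2$ term. I therefore expect $k$ to be a constant absorbing $p$, for instance $k\ge p$, together with the new slack variable.

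This gives
\[
\widehat\beta^2+\overline{\lambda}_{\min}^2+k\ \ge\ \widehat\alpha^2+\widehat\beta^2+\|\widehat z\|^2=\|\widehat x\|^2 .
\]
The case $\rho=2$ in Theorem~\ref{th: minnoncon} then shows $D\neq\varnothing$.

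Applying Theorem~\ref{th: minnoncon} with the extra coordinate $x_{m+1}$ turns the problem into minimizing $\|x\|^2$ subject to $\beta^2+\eta-\|x\|^2\le 0$, that is $f_{\mathrm{obj}}\le 0$, together with the original constraints. This is exactly \eqref{prob: SL3 final}. Its constraint functions are weakly convex: $f_\kappa$ and $g$ were shown to be so above, $f_{\mathrm{obj}}$ is $\beta^2$ minus a quadratic, and the $\lambda_{\min}$ constraint is convex. So the reformulation fits \eqref{prob: proj}, with $P$ the polyhedron from \eqref{eq:poly} extended by $\alpha,\beta\ge\delta$.

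The main obstacle is that Theorem~\ref{th: minnoncon} only characterizes the points of $D$, so I must ensure that every minimizer of \eqref{prob: SL3 not final} belongs to $D$, not just some of them. The bound above holds for all minimizers, but only if $\overline{\lambda}_{\min}$ is a valid upper bound on $\lambda_{\min}$ over all feasible $z$. I would obtain one from the relaxation: $\lambda_{\min}\le\operatorname{trace}/n\le(\sum_i\|a_i\|^2+n\delta)/n$. I also need to pin down $k$ precisely ($k=p$ should suffice), since the statement leaves it implicit.
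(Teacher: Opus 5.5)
Your proposal is correct and follows the paper's proof essentially step for step. You replace $\beta$ by $\beta^2$ (valid since $\beta\ge\delta>0$), apply Theorem~\ref{th: minnoncon} with $\rho=2$ so that the $\beta^2$ terms cancel, and bound $\alpha^2+\|z\|^2\le\overline{\lambda}_{\min}^2+p$ for every minimizer, using $g\le 0$ and $\alpha\le\lambda_{\min}\le\overline{\lambda}_{\min}$. The paper takes $\eta=\overline{\lambda}_{\min}^2+p^2$, and in its last line effectively $k=\|z\|^2=p$, so any $k\ge p$ works as you say; however, the slack coordinate $x_{m+1}$ does not enter the condition defining $D$, so $k$ does not need to absorb it.
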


\begin{proof}
Since $\beta \ge \delta$, we may replace the objective function in \eqref{prob: SL3 not final} by $\beta^2$. By \cite[Theorem 1]{bednarczuk2026outer}, it suffices to find $(\eta, \rho) \in \mathbb{R} \times \mathbb{R}_{++}$ such that
\[
D := \bigl\{ (\alpha,\beta,z_1,...,z_m) \in \arg\min \eqref{prob: SL3 not final} \ \big|\ 
\beta^2 + \eta \ge \tfrac{\rho}{2} \|(\alpha,\beta,z_1,...,z_m)\|^2 \bigr\} \neq \varnothing.
\]
Choosing $\rho = 2$, the condition becomes
\begin{align}
\beta^2 + \eta &\ge \|(\alpha,\beta,z_1,...,z_m)\|^2, \\
\eta &\ge \alpha^2 + \|z_1,...,z_m\|^2.
\end{align}
The choice $\eta = \overline{\lambda}_{\min}^2 + p^2$ satisfies both inequalities for every $(\alpha,\beta,z_1,...,z_m)$ in the argmin set of \eqref{prob: SL3 not final}. Indeed, $g(z_1,...,z_m) \le 0$ implies $z_i \in \{0,1\}$ and $\sum z_i^2 = p$. Moreover,
\[
\overline{\lambda}_{\min}^2 \ge \lambda_{\min}^2(\alpha,\beta,z_1,...,z_m) \ge \alpha^2,
\]
and therefore
\[
\eta = \overline{\lambda}_{\min}^2 + k = \overline{\lambda}_{\min}^2 + \|z_1,...,z_m\|^2 \ge \alpha^2 + \|z_1,...,z_m\|^2.
\]
\end{proof}

\begin{remark}
Since $-\lambda_{\min}(\cdot)$ is convex on the cone of positive definite matrices, an upper bound $\overline{\lambda}_{\min}$ can be computed by solving a convex optimization problem.
\end{remark}

		\section{Cutting sphere algorithms for sensor selection}

\subsection{Subgradients of the constraints}

Let \(A_i = a_i a_i^T\) for \(i = 1, \dots, m\) and define \(A(z) = \sum_{i=1}^m A_i z_i + \delta I\). In our setting, the vectors \(a_i \in \mathbb{R}^n\), \(i = 1, \dots, m\) are assumed to be linearly independent, and we select exactly \(p\) of the indices such that the corresponding \(z_i\) equal one. Thanks to the term \(\delta I\), $\delta>0$, the matrix \(A(z)\) is positive definite.

In the following we derive the subgradients of the functions \(-\log\det(\cdot)\), \(g + \rho \|\cdot\|^2\) and \(f_\kappa + \|\cdot\|^2\) introduced in the previous sections, evaluated at a point \(x \in \mathbb{R}^n\).

\begin{itemize}
\item \textbf{Gradient of \(-\log\det\)}.
For a symmetric positive definite matrix \(X \in \mathbb{R}^{n \times n}\), the function \(-\log\det(X)\) is convex. Consider the polyhedron in \eqref{eq:poly}

For every \(z_1\ge 0, \dots, z_m\ge 0\), we have that $A(z)$ is positive definite. The components of the gradient $-\nabla_z \log\det\Bigl( \sum_{i=1}^m z_i A_i + \delta I \Bigr)$ are as follows.
\[
-\nabla_z \log\det\Bigl( \sum_{i=1}^m z_i A_i + \delta I \Bigr)_i = -\operatorname{tr}(A_i A^{-1}(z)), \quad i=1,...,m,
\]
(see \cite[Example A.3]{Joshi2009}), and
\[
-\nabla_{x_{m+1}} \log\det\Bigl( \sum_{i=1}^m z_i A_i + \delta I \Bigr) = 0.
\]

\item \textbf{Subdifferential of \(g + \rho \|\cdot\|^2\)}. 
For \(\rho \ge 1\), and $x\in \mathbb{R}^m$. The components of an elelment $v$ of the convex subdifferential $\partial (g + \rho \|\cdot\|^2)(x)$ are,
for $i=1,...,m$,
\begin{equation}
 v_i =
\begin{cases}  2(\rho-1)x_i + 1 & \text{if } 0 < x_i < 1, \\
2(\rho + 1)x_i - 1 & \text{if } x_i < 0 \text{ or } x_i > 1, \\
2\rho x_i + \bigl\{ 
 \alpha(2x_i-1) + (1-\alpha)(-2x_i+1) \bigr.\ \ (\alpha \in [0,1])\} & \text{if } x_i \in \{0,1\}.\end{cases}
\end{equation}

\item \textbf{Subdifferential of \(f_\kappa + \|\cdot\|^2\)}. An element of the convex subdifferential of \(f_\kappa(\cdot) + \|\cdot\|^2\) at \(x \in \mathbb{R}^{n+3}\), $x=(\alpha, \beta, z_1,..,z_m, x_{m+1})$, $z_i\ge0$ $i=1,...,m$, is given by the vector 
\begin{equation}
\begin{pmatrix}
2\alpha - \beta,
2\beta - \alpha,
v_1,
\dots,
v_m,
2x_{m+1}
\end{pmatrix}^T,
\end{equation}
where the components \(v_i\), \(i=1,...,m\) are defined by
\[
v_i := 2z_i + \operatorname{trace}(G, a_i a_i^T)
\]
for some \(G \in \partial \lambda_{\max}(A)\) with \(A = z_i a_i a_i^T + \delta I\). If \(w\) is a leading eigenvector of \(A\), then \(G\) can be taken as \(ww^T\), see Section \ref{sec: not}.

On the other hand, an element of the subdifferential of the convex function \(-\lambda_{\min}(z_i a_i a_i^T + \delta I) + \alpha\) is
\begin{equation}
\begin{pmatrix}
1 ,
0 ,
v_1 ,
\dots ,
v_m ,
0
\end{pmatrix}^T,
\end{equation}
where \(v_i := -\operatorname{trace}(u u^T, a_i a_i^T)\) and \(u\) is an eigenvector corresponding to the smallest eigenvalue of \(z_i a_i a_i^T + \delta I\).
\end{itemize}

\subsection{Outer approximation set}

The construction of the outer approximation set for the feasible set of Problem \eqref{prob: SL final} is described in \cite[Definition 1]{bednarczuk2026outer}. Consider problem \eqref{prob: proj} and a point \(x_k\) with \(k \in \mathbb{N}\). Let \(I_k \subseteq \{1,...,m\}\) be the set of indices such that \(f_i(x_k) > 0\) for all \(i \in I_k\). Then, for \(a_i \ge \rho_i\), \(b_i \in \partial (f_i + a_i \|\cdot\|^2)(x_k)\) and \(c_i = a_i \|x_k\|^2 - b_i^T x_k + f_i(x_k)\), \(i \in I_k\), the outer approximation set is defined by
\begin{equation}
\label{outer set}
\mathcal{O}(x_k) := \bigl\{ x \in P \ \big|\ 
-a_i \|x\|^2 + b_i^T x + c_i \le 0,\ i \in I_k \bigr\},
\end{equation}
where \(P\) is the polyhedron given in \eqref{eq:poly}.

As an example, given \(\overline{x} \in E\) such that \(f_\Omega(\overline{x}) > 0\) and \(g(\overline{x}) > 0\), the outer approximation set for problem \eqref{prob: SL final} can be constructed as
\begin{equation}
\label{outer set ex}
\mathcal{O}(\overline{x}) := \bigl\{ x \in P \ \big|\ 
\phi_\Omega(x) - \phi_\Omega(\overline{x}) + f_\Omega(\overline{x}) \le 0;\ 
\phi_g(x) - \phi_g(\overline{x}) + g(\overline{x}) \le 0 \bigr\},
\end{equation}
where \(\phi_\Omega(x) := -a_\Omega \|x\|^2 + b_\Omega^T x\) with \(a_\Omega \ge 1\) larger than the modulus of weak convexity of \(f_\Omega\) (which equals 1), and \(b_\Omega \in \partial (f_\Omega + a_\Omega \|\cdot\|^2)(\overline{x})\). Similarly, \(\phi_g(x) := -a_g \|x\|^2 + b_g^T x\) with \(a_g \ge 1\) larger than the modulus of weak convexity of \(g\) (which equals 1), and \(b_g \in \partial (g + a_g \|\cdot\|^2)(\overline{x})\).

\subsection{Inexact cutting sphere algorithm}

Fix \(\varepsilon > 0\). Let \(k \in \mathbb{N}\). We say that iteration \(k+1\) is a *restart iteration* for the Inexact Cutting Sphere algorithm \cite[Algorithm 3]{bednarczuk2026outer} if
\begin{equation}
\label{it: restart}
x_{k+1} = \arg\min_{x \in \mathbb{R}^n} \|x\|^2 
\quad \text{s.t.} \quad 
x \in \mathcal{O}(x_k) \cap \bigl\{ x \in \mathbb{R}^n \ \big|\ \|x\|^2 \ge \|x_k\|^2 + \varepsilon \bigr\},
\end{equation}
where \(\mathcal{O}(x_k)\) is defined in \eqref{outer set}.

We say that iteration \(k+1\) is a *cumulative iteration* if
\begin{equation}
\label{it: cum}
x_{k+1} = \arg\min_{x \in \mathbb{R}^n} \|x\|^2 
\quad \text{s.t.} \quad 
x \in \mathcal{O}(x_k) \cap \Bigl( \bigcap_{j=r_k}^k \mathcal{O}(x_j) \Bigr) 
\cap \bigl\{ x \in \mathbb{R}^n \ \big|\ \|x\|^2 \ge \|x_{r_k-1}\|^2 + \varepsilon \bigr\},
\end{equation}
where \(r_k \le k\) denotes the index of the previous restart iteration.

\begin{lemma}
\label{lem: again}
Let \(\mathcal{A}\) be the feasible set of \eqref{prob: proj} and let \(k \in \mathbb{N}\). Then the following properties hold:
\begin{itemize}
    \item \(x_k \notin \mathcal{O}(x_k)\).
    \item \(\mathcal{A} \subset \mathcal{O}(x_k) \cap \bigcap_{j=r_k}^k \mathcal{O}(x_j)\).
\end{itemize}
\end{lemma}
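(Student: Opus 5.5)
Both items follow from one identity. For each $i\in I_k$, the quadratic minorant used in \eqref{outer set} coincides with $f_i$ at $x_k$. Let $\varphi_i(x):=-a_i\|x\|^2+b_i^Tx+c_i$. From the definition of $c_i$,
\[
\varphi_i(x_k)=-a_i\|x_k\|^2+b_i^Tx_k+a_i\|x_k\|^2-b_i^Tx_k+f_i(x_k)=f_i(x_k).
\]
Since $a_i\ge\rho_i$, the function $\tilde f_i:=f_i+a_i\|\cdot\|^2$ is convex. Because $b_i\in\partial\tilde f_i(x_k)$, for every $x$ we have
\[
f_i(x)+a_i\|x\|^2\ \ge\ f_i(x_k)+a_i\|x_k\|^2+b_i^T(x-x_k).
\]
Rearranging gives $f_i(x)\ge\varphi_i(x)$ for all $x\in\mathbb{R}^n$. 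This is exactly the $\Phi_{\mathrm{lsc}}$-subgradient inequality from Section~\ref{sec: not}, shifted by the constant $c_i$.

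\textbf{First item.} I would use only the identity at $x_k$. We may assume $I_k\neq\varnothing$, which is the situation in which the algorithm generates a cut; if $I_k=\varnothing$ then $x_k$ is feasible and no cut is built. Pick any $i\in I_k$. Then $\varphi_i(x_k)=f_i(x_k)>0$, so $x_k$ violates the $i$-th defining inequality of $\mathcal{O}(x_k)$. Hence $x_k\notin\mathcal{O}(x_k)$, whether or not $x_k\in P$.

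\textbf{Second item.} Let $x\in\mathcal{A}$. By the definition of \eqref{prob: proj}, $x\in P$ and $f_i(x)\le0$ for every $i$. For each $j$ with $r_k\le j\le k$ and each $i\in I_j$, the minorant inequality built at $x_j$ gives $\varphi_i^{(j)}(x)\le f_i(x)\le0$. Therefore $x\in\mathcal{O}(x_j)$ for all such $j$. This yields $\mathcal{A}\subset\mathcal{O}(x_k)\cap\bigcap_{j=r_k}^k\mathcal{O}(x_j)$, where the first set in the intersection is just the $j=k$ term.

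\textbf{Main obstacle.} The real care point is the global minorant inequality in the second item. It requires $b_i$ to be a genuine convex subgradient of $f_i+a_i\|\cdot\|^2$ on the whole space, or at least on $P$, where the $f_i$ may take the value $+\infty$ away from the relevant region. This is guaranteed by properness and weak convexity with $a_i\ge\rho_i$. One further check is that $\partial\tilde f_i(x_k)\neq\varnothing$. This follows from the standing assumption that the $f_i$ are continuous near the relevant level set, so that $x_k$ lies in the interior of $\operatorname{dom}\tilde f_i$.
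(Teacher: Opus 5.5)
Your proof is correct and follows the paper's argument. The first item uses the identity $\varphi_i(x_k)=f_i(x_k)>0$ for some $i\in I_k$. The second uses the global minorant $f_i\ge\varphi_i$, which you derive directly from the subgradient inequality for $f_i+a_i\|\cdot\|^2$, whereas the paper cites \cite[Lemma 2]{bednarczuk2026outer}. Your caveat that the first item needs $I_k\neq\varnothing$ (otherwise $\mathcal{O}(x_k)=P\ni x_k$) makes explicit an assumption the paper leaves implicit.
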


\begin{proof}
\begin{itemize}
    \item Let \(I_k\) be the set of indices such that \(f_i(x_k) > 0\) for \(i \in I_k\), where the functions \(f_i\) describe the feasible set \(\mathcal{A}\) of \eqref{prob: proj}. Take any \(j \in I_k\) and consider the corresponding inequality \(-a_j \|x\|^2 + b_j^T x + c_j \le 0\) defining \(\mathcal{O}(x_k)\). By definition of \(c_j\) we have
    \[
    -a_j \|x_k\|^2 + b_j^T x_k + c_j = f_j(x_k) > 0,
    \]
    which shows that \(x_k \notin \mathcal{O}(x_k)\).

    \item For every \(x \in \mathcal{A}\) and every \(i \in I_p\), \(p \in \{r_k, \dots, k\}\), we have
    \[
    0 \ge f_i(x) \ge -a_i \|x\|^2 + b_i^T x + c_i,
    \]
    where the second inequality follows from \cite[Lemma 2]{bednarczuk2026outer}. Therefore \(\mathcal{A} \subset \mathcal{O}(x_p)\) for all \(p \in \{r_k, \dots, k\}\), which concludes the proof.
\end{itemize}
\end{proof}

Both problems \eqref{it: restart} and \eqref{it: cum} are quadratically constrained quadratic programs of the form
\begin{equation}
\label{outerproblem}
\tag{OP$_k$}
\minimize{x \in \OAP_k} \|x - z\|^2,
\end{equation}
where
\begin{equation}
\label{oaaset0}
\begin{split}
    &\OAP_k = P\cap \mathcal{O}(x_k) \cap \Bigl( \bigcap_{j=r_k}^k \mathcal{O}(x_j) \Bigr) 
\cap \bigl\{ x \in \mathbb{R}^n \ \big|\ \|x\|^2 \ge \|x_{r_k-1}\|^2 + \varepsilon \bigr\},\\
    &\bigl\{ x \in P \ \big|\ 
(\forall\, i \in \{1, \dots, m_k\})\ q_i^k(x) \le 0 \bigr\}
\end{split}
\end{equation}
and each \(q_i^k\) is a quadratic inequality of the form \(-a_i^k \|x\|^2 + (b_i^k)^T x + c_i^k\) with \(a_i^k \ge 0\).

The fact that we work with the polyhedron \(P\) instead of \(\mathbb{R}^n\) in \eqref{outer set} does not affect the analysis of \cite[Algorithm 1 and 3]{bednarczuk2026outer}, as shown in the following lemma.

\begin{lemma}
Let \(a > 0\) be such that \(\|x^*\|^2 > a\) for every global feasible solution \(x^*\) of \eqref{prob: proj}. Then the sequence \(\{x_k\}_{k \in \mathbb{N}}\) generated by \cite[Algorithm 1]{bednarczuk2026outer} (with \(\mathbb{R}^n\) replaced by \(P\)) lies in the compact set \(\operatorname{lev}_{\le a} \|\cdot\|^2 \cap P\) if $x_0\in P$. Under the assumptions that \(\mathcal{A} \neq \varnothing\) and that the functions \(f_i\), \(i=1,...,m\) are continuous on an open set containing \(\operatorname{lev}_{\le a} \|\cdot\|^2\), both \cite[Algorithm 1]{bednarczuk2026outer} and \cite[Algorithm 3]{bednarczuk2026outer} remain well-defined, and all theoretical results established in \cite{bednarczuk2026outer} continue to hold.
\end{lemma}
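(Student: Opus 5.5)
The argument runs by induction on the iteration index and checks, step by step, that replacing $\mathbb{R}^n$ with $P$ does not break any ingredient used in \cite{bednarczuk2026outer}. The level set $\operatorname{lev}_{\le a}\|\cdot\|^2$ is a closed ball, hence compact. The polyhedron $P$ in \eqref{eq:poly} is closed. Therefore $\operatorname{lev}_{\le a}\|\cdot\|^2\cap P$ is compact, which is the topological part of the claim. For the localization of the iterates, I would rely on Lemma~\ref{lem: again}: every outer approximation set satisfies $\mathcal{A}\subset \mathcal{O}(x_k)\cap\bigcap_{j=r_k}^k\mathcal{O}(x_j)$. Since $\mathcal{A}\subset P$ by definition in \eqref{prob: proj}, this gives $\mathcal{A}\subset \OAP_k$ for the non-restart (Algorithm 1) iterations, which contain no $\varepsilon$-shell constraint.

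The key inductive step is as follows. Assume $x_0\in P$; for instance, $x_0$ is the minimum-norm point of $P$ or lies in it by hypothesis. At step $k+1$, $x_{k+1}$ is defined as the minimum-norm point of a closed set $\OAP_k\subset P$, so $x_{k+1}\in P$ automatically. That set contains $\mathcal{A}\neq\varnothing$, so the minimization is over a nonempty set. Because $\|\cdot\|^2$ is coercive and $\OAP_k$ is closed, a minimizer exists; this gives well-definedness. Moreover $x_{k+1}$ minimizes over a superset of $\mathcal{A}$, so $\|x_{k+1}\|^2\le \min_{\mathcal{A}}\|x\|^2=\operatorname{Opt}\eqref{prob: proj}<a$. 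I read the hypothesis as $a$ exceeding the optimal value, as in the standing assumption of \eqref{prob: proj}. Hence $x_{k+1}\in \operatorname{lev}_{\le a}\|\cdot\|^2\cap P$.

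For Algorithm 3, the restart and cumulative subproblems \eqref{it: restart}, \eqref{it: cum} add the shell $\{\|x\|^2\ge \|x_{r_k-1}\|^2+\varepsilon\}$. Here nonemptiness is handled exactly as in \cite{bednarczuk2026outer}: either the shell-constrained set is empty, which is the algorithm's stopping certificate, or it is nonempty, closed and bounded, so a minimizer exists in $P$. The remaining theoretical results, namely the validity of the quadratic minorants \cite[Lemma 2]{bednarczuk2026outer}, the strict separation $x_k\notin\mathcal{O}(x_k)$, and the convergence and $\varepsilon$-optimality arguments, use only three facts. The first is continuity of the $f_i$ near the compact set containing the iterates, which we now have. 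The second is compactness, which gives cluster points. The third is the inclusion $\mathcal{A}\subset\OAP_k$. All three were verified above with $P$ in place of $\mathbb{R}^n$. I would then say that the proofs transfer verbatim, since intersecting with a fixed closed convex set preserves every inclusion and limit argument.

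The main obstacle is making precise which steps of \cite{bednarczuk2026outer} implicitly use the whole space. A likely example is a closed-form projection onto a single sphere-cut set, which would no longer be explicit once $P$ is included. I would argue that convergence only needs existence of the subproblem minimizers, not their explicit form, and that cluster points of $\{x_k\}$ lie in $P$ by closedness. Those cluster points are feasible by the same continuity/limit argument.
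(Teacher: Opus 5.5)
Your proposal is correct and follows essentially the same route as the paper: the iterates lie in $P$ by construction and in $\operatorname{lev}_{\le a}\|\cdot\|^2$ because every $\OAP_k$ contains $\mathcal{A}$ (Lemma~\ref{lem: again}), so $E=\operatorname{lev}_{\le a}\|\cdot\|^2\cap P$ is a compact set near which the $f_i$ are continuous and \cite[Assumption 1]{bednarczuk2026outer} holds with $E$ in place of the level set; your reading of the hypothesis as $a>\operatorname{Opt}$ is also what the paper's proof tacitly uses (read literally, $\|x^*\|^2>a$ would make the claim false). The one point the paper adds is that, since $P$ is cut out by linear constraints, \eqref{outerproblem} remains solvable by the method of \cite[Subsection 8.2]{bednarczuk2026outer}, which settles your concern about losing explicit subproblem solutions; also, the shell-constrained sets in Algorithm 3 are not bounded, because $P$ leaves the auxiliary coordinates free, but coercivity of $\|\cdot\|^2$ already gives existence of minimizers.
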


\begin{proof}
The sequence \(\{x_k\}_{k \in \mathbb{N}}\) lies in \(P\) by construction and in \(\operatorname{lev}_{\le a} \|\cdot\|^2\) because each \(\mathcal{O}_k\) is an outer approximation of \(\mathcal{A}\cap P\) by Lemma \ref{lem: again}. Let \(E := \operatorname{lev}_{\le a} \|\cdot\|^2 \cap P\). The set \(E\) is compact, and the functions \(f_i\) remain continuous on an open set containing \(E\). Therefore, the properties required in \cite[Assumption 1]{bednarczuk2026outer} are satisfied when \(\operatorname{lev}_{\le a} \|\cdot\|^2\) is replaced by \(\operatorname{lev}_{\le a} \|\cdot\|^2 \cap P\). Moreover, since \(P\) is defined by linear equalities and inequalities, problem \eqref{outerproblem} can still be solved using the method described in \cite[Subsection 8.2]{bednarczuk2026outer}.
\end{proof}

Notice that, if \(0 \notin P\), a suitable starting point \(x_0 \neq 0\) must be chosen satisfying:
\begin{enumerate}
    \item \(x_0 \in \operatorname{lev}_{\le a} \|\cdot\|^2 \cap P\),
    \item \(x_0 \notin \mathcal{A}\).
\end{enumerate}

\subsection{Output of the Inexact Cutting Sphere algorithm and parameters}

The main parameters of the Inexact Cutting Sphere algorithm \cite[Algorithm 3]{bednarczuk2026outer} are the tolerance \(\varepsilon > 0\) and the safety parameter \(\overline{m}\), which provides an upper bound on the number of constraints of problem \eqref{outerproblem}.

After choosing \(\varepsilon > 0\) and a starting point \(x_0 \in E\), the algorithm searches for solutions on the level set \(\operatorname{lev}_{=\|x_0\|^2} \|\cdot\|^2\). Ideally, if a global \(\varepsilon\)-solution \(x^*\) exists on this level set, the algorithm returns it. Otherwise, when the conditions of \cite[Algorithm 3]{bednarczuk2026outer} are met, it restarts from a point \(x_{r_1}\) lying on the level set \(\operatorname{lev}_{=\|x_0\|^2 + \varepsilon} \|\cdot\|^2\).

It may happen that, at some iteration \(k \in \mathbb{N}\), the number of constraints \(m_k\) of problem \eqref{outerproblem} exceeds the upper bound \(\overline{m}\), i.e., \(m_k > \overline{m}\). In this case the algorithm terminates without returning a solution. From our numerical experience, this situation occurs more frequently when the current level set is close to the optimal level set \(\operatorname{lev}_{=\|x^*\|^2} \|\cdot\|^2\). In such cases we recommend increasing \(\varepsilon\), or increasing \(\overline{m}\) if sufficient computational resources are available.

		
	\section{Inverse cutting sphere algorithm}
\label{sec: inv}

In the previous sections we observed that finding a good starting point for the cutting sphere algorithm can be difficult. Moreover, outer approximation algorithms (even in the convex case) often suffer from slow convergence when the initial point \(x_0 \in \mathbb{R}^n\) is far from a global minimizer \(x^* \in \mathbb{R}^n\).

In this section we propose a new method, called the **Inverse Cutting Sphere algorithm**. The algorithm starts from a known feasible solution \(\overline{x}\) of problem \eqref{prob: proj} satisfying \(\|\overline{x}\|^2 \ge \|x^*\|^2\), where \(x^*\) is a global solution of \eqref{prob: proj}. Its goal is either to find a better feasible point \(\widetilde{x}\) such that \(\|\widetilde{x}\|^2 = \|\overline{x}\|^2 - \varepsilon\), or to certify that \(\overline{x}\) is already a global \(\varepsilon\)-solution of \eqref{prob: proj}.

As in the inexact version, we introduce an upper bound \(\overline{m}\) on the computational cost (specifically, an upper bound on the number of constraints \(m_k\) of problem \eqref{outerproblem}). Before running the algorithm, we choose the tolerance \(\varepsilon > 0\) (which determines the target level \(\alpha = \|\overline{x}\|^2 - \varepsilon\)) and the safety parameter \(\overline{m}\). A suitable balance between \(\varepsilon\) and \(\overline{m}\) must be struck, since their difference represents the trade-off between solution accuracy and computational effort.

We prove that the Inverse Cutting Sphere algorithm generates a sequence \(\{x_k\}_{k \in \mathcal{L} \subset \mathbb{N}}\) satisfying the following properties:
\begin{enumerate}
    \item \(\mathcal{L}\) is finite; hence there exists an index \(l \in \mathcal{L}\) such that \(l > k\) for all \(k \in \mathbb{N} \setminus \{l\}\).
    \item If \(x_l\) is feasible for \eqref{prob: proj}, then it improves the heuristic solution \(\overline{x}\) by an amount \(\varepsilon\).
    \item If \(x_l\) is infeasible for \eqref{prob: proj} and the upper bound \(\overline{m}\) has not been reached, then \(\overline{x}\) is a global \(\varepsilon\)-solution of \eqref{prob: proj}.
    \item \(\|x_k\|^2 = \alpha\) for every \(k \in \mathcal{L}\).
\end{enumerate}

The name “Inverse” reflects the fact that we start from a feasible point and search for an improved solution on a *lower* level set. In contrast, the Inexact Cutting Sphere algorithm \cite[Algorithm 3]{bednarczuk2026outer} starts from an infeasible point and searches for a feasible point on successively *higher* level sets.

We begin with a preparatory lemma.

\begin{lemma} \cite[Lemma 12]{bednarczuk2026outer}
\label{lem: from out to feas}
Let \(k \in \mathbb{N}\) and let \(\alpha > 0\) be given with \(\alpha = \operatorname{Opt}(\eqref{outerproblem})\). Consider the outer approximation problem \eqref{outerproblem} with
\[
\mathcal{O}_k := \bigl\{ x \in P \ \big|\ 
L_i(\varphi_i^k, x_k)(x) = -a_i \|x\|^2 + b_{i,k}^\top x + c_{i,k} \le 0,\ 
i \in I(x_k) \bigr\} \cap \bigcap_{p=r_k}^{k-1} \mathcal{O}_p,
\]
where \(r_k\) is the index of the last restart iteration before \(k\).

Problem \eqref{outerproblem} is equivalent to the following auxiliary feasibility problem on the sphere \(\mathcal{S}_\alpha := \{ x \in \mathbb{R}^n \mid \|x\|^2 = \alpha \}\) and the polyhedron \(\mathcal{O}_k^\alpha\):
\begin{equation}
\label{Feasibility_prob}
\text{Find } x \in \mathcal{S}_\alpha \cap \mathcal{O}_k^\alpha,
\end{equation}
where
\[
\mathcal{O}_k^\alpha := \bigl\{ x \in P \ \big|\ 
b_{i,k}^\top x \le a_i \alpha - c_{i,k},\ 
i \in I(x_k) \bigr\} \cap \bigcap_{t=r_k}^{k-1} \mathcal{O}_t^\alpha,
\]
in the sense that \(x \in \mathcal{S}_\alpha \cap \mathcal{O}_k^\alpha\) if and only if \(x\) is a global optimal solution of \eqref{outerproblem}.
\end{lemma}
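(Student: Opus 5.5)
The plan is to exploit the fact that, on the sphere $\mathcal{S}_\alpha$, every quadratic cut collapses to a linear inequality. Each constraint defining $\mathcal{O}_k$ (and every $\mathcal{O}_p$ with $r_k\le p\le k-1$) has the form $-a_i\|x\|^2+b_{i,k}^\top x+c_{i,k}\le 0$. For $x\in\mathcal{S}_\alpha$ we have $\|x\|^2=\alpha$, so this inequality is equivalent to $b_{i,k}^\top x\le a_i\alpha-c_{i,k}$. I will state this as a first observation: $\mathcal{S}_\alpha\cap\mathcal{O}_k=\mathcal{S}_\alpha\cap\mathcal{O}_k^\alpha$. The same substitution is applied to every earlier cut in the intersection $\bigcap_{t=r_k}^{k-1}$, and the polyhedral constraint $x\in P$ is common to both sets.

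Next I will handle the feasible set of \eqref{outerproblem}. This is $\OAP_k$, which also contains the level constraint $\|x\|^2\ge \|x_{r_k-1}\|^2+\varepsilon$ (or its restart analogue). Since $\alpha=\operatorname{Opt}(\eqref{outerproblem})$ is attained, $\alpha$ is at least this level bound, so any point of $\mathcal{S}_\alpha$ satisfies the level constraint automatically. I will treat the objective as $\|x\|^2$ (i.e.\ $z=0$, which is how the cutting sphere iterations \eqref{it: restart}, \eqref{it: cum} are written).

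With these two observations the equivalence follows in both directions.

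($\Leftarrow$) If $x$ is a global optimum of \eqref{outerproblem}, then $\|x\|^2=\alpha$, so $x\in\mathcal{S}_\alpha$. By the first observation, $x\in\mathcal{O}_k^\alpha$.

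($\Rightarrow$) If $x\in\mathcal{S}_\alpha\cap\mathcal{O}_k^\alpha$, the first observation gives $x\in\mathcal{O}_k$. The level constraint holds as noted above, so $x$ is feasible for \eqref{outerproblem}. Its objective value $\|x\|^2=\alpha$ equals the optimal value, so $x$ is a global minimizer.

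The main obstacle I expect is bookkeeping rather than mathematics. The cuts inherited from earlier iterations use their own coefficients $a_i, b_{i,t}, c_{i,t}$, and these must be linearized at the same $\alpha$. I also need to confirm that the level-set constraint in $\OAP_k$ is implied by $\|x\|^2=\alpha$, which relies on $\alpha$ being the attained optimum. Attainment holds because the feasible set is closed, contained in $P$, and the objective is coercive, as in the lemma on the compact set $\operatorname{lev}_{\le a}\|\cdot\|^2\cap P$. Finally, I will note that the statement is taken from \cite[Lemma 12]{bednarczuk2026outer}, and that replacing $\mathbb{R}^n$ by $P$ changes nothing, since $P$ appears identically in both sets.
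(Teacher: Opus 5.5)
Your proof is correct. On $\mathcal{S}_\alpha$ each quadratic cut, current or inherited, is equivalent to its linearized version, so $\mathcal{S}_\alpha\cap\mathcal{O}_k=\mathcal{S}_\alpha\cap\mathcal{O}_k^\alpha$; the hypothesis $\alpha=\operatorname{Opt}(\eqref{outerproblem})$, with $z=0$ as forced by $\mathcal{S}_\alpha$ being centred at the origin, then gives both directions, and the level constraint holds automatically (attainment is not even needed, since a finite infimum over a subset of $\{\|x\|^2\ge\|x_{r_k-1}\|^2+\varepsilon\}$ is already at least that bound). The paper gives no proof of its own and defers to \cite[Lemma 12]{bednarczuk2026outer}, and your linearization on the sphere is the standard argument for that result.
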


\subsection{Algorithm description and analysis}

In Algorithm \ref{alg: inv inexact} the following subroutine plays a central role.

\makeatletter
\renewcommand*{\ALG@name}{Subroutine}
\makeatother
\setcounter{algorithm}{0}

\begin{algorithm}[H]
\caption{Outer problem solver for given \(\varepsilon > 0\)}
\label{alg: outsolver}
\begin{itemize}
    \item Let \(\alpha = \|\overline{x}\|^2 - \varepsilon\). Define the binary variable \(\texttt{STOP} \in \{\texttt{True}, \texttt{False}\}\). Let \(\mathcal{S}_\alpha\) be the sphere of radius \(\alpha\) centered at the origin, and let the polyhedron \(\mathcal{O}_k^\alpha\) be defined as in Lemma \ref{lem: from out to feas}.
    
    \begin{enumerate}[label=\arabic*:]
        \item If \(\mathcal{S}_\alpha \cap \mathcal{O}_k^\alpha \neq \varnothing\), set \(x_{k+1} \in \mathcal{S}_\alpha \cap \mathcal{O}_k^\alpha\) and \(\texttt{STOP} = \texttt{FALSE}\).
        \item Else set \(x_{k+1} = x_k\) and \(\texttt{STOP} = \texttt{TRUE}\).
        \item \textbf{Return} \(x_{k+1}\), \(\texttt{STOP}\).
    \end{enumerate}
\end{itemize}
\end{algorithm}

Let \(m_k\) denote the computational cost of Subroutine \ref{alg: outsolver} (for example, the number of quadratic constraints in \eqref{outerproblem}).

\makeatletter
\renewcommand*{\ALG@name}{Algorithm}
\makeatother
\setcounter{algorithm}{0}

\begin{algorithm}[H]
\caption{Inverse inexact cutting sphere}
\label{alg: inv inexact}
\begin{algorithmic}[1]
\ENSURE{\(x_0 = \overline{x}\), \(k = 0\), \(\mathcal{O}_{-1} = \{x \in \mathbb{R}^n \mid \|x\|^2 \ge \alpha\}\), \(m_0 = 0\), \(\overline{m} > m\)}
\LOOP
    \IF{\(I(x_k) \neq \varnothing\) and \(m_k \le \overline{m}\) and \(\texttt{STOP} = \texttt{FALSE}\)}
        \STATE Build \(\mathcal{O}_k\) as in \eqref{it: cum}.
        \STATE Apply Subroutine \ref{alg: outsolver} to obtain \(x_{k+1}\) and update \(\texttt{STOP}\).
        \STATE \(k \leftarrow k + 1\)
    \ELSE
        \STATE \textbf{EXIT LOOP}
    \ENDIF
\ENDLOOP
\end{algorithmic}
\end{algorithm}

The following assumption is fundamental for the analysis of Algorithm \ref{alg: inv inexact}.

\begin{assumption}
\label{assumption:epsilon}
Let \(\overline{x}\) and \(x^*\) be a feasible point and a global optimal solution of Problem \eqref{prob: proj}, respectively. The parameter \(\varepsilon > 0\) satisfies:
If \(\|\overline{x}\|^2 - \varepsilon \ge \|x^*\|^2\), then \(\mathcal{A} \cap \operatorname{lev}_{=\alpha} \|\cdot\|^2 \neq \varnothing\), where \(\alpha = \|\overline{x}\|^2 - \varepsilon\).
\end{assumption}

The next lemma shows that Assumption \ref{assumption:epsilon} holds for every \(\varepsilon > 0\) when Problem \eqref{prob: proj} is the reformulation of Problem \eqref{prob: proj_pre} given in Theorem \ref{th: minnoncon}.

\begin{lemma}
\label{lem: ass epsilon holds}
Let Problem \eqref{prob: proj} be the reformulation of Problem \eqref{prob: proj_pre} described in Theorem \ref{th: minnoncon}. Then Assumption \ref{assumption:epsilon} holds for every \(\varepsilon > 0\).
\end{lemma}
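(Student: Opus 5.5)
The plan is to use the special structure of the feasible set $\mathcal{A}$ in \eqref{eq: new constr form}: the last coordinate $y_n$ is free, and it enters the constraints only through the term $-\frac{\rho}{2}\|y\|^2$. Increasing $|y_n|$ therefore only relaxes the first constraint, and it does not affect the constraint $\overline{y}\in\overline{\mathcal{A}}$. Starting from a feasible point, I will slide along the $y_n$-axis to reach any prescribed larger level of $\|\cdot\|^2$ while staying in $\mathcal{A}$.

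Fix $\varepsilon>0$. Suppose $\alpha:=\|\overline{x}\|^2-\varepsilon\ge\|x^*\|^2$, where $x^*=(\overline{x}^*,x^*_n)$ is a global solution of \eqref{problem2}, which exists by Theorem \ref{th: minnoncon} because $D\neq\varnothing$.

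\textbf{Step 1.} Set $t:=\alpha-\|\overline{x}^*\|^2\ge (x_n^*)^2\ge 0$, and define $y:=(\overline{x}^*,\sqrt{t})$. Then $\|y\|^2=\|\overline{x}^*\|^2+t=\alpha$, so $y\in\operatorname{lev}_{=\alpha}\|\cdot\|^2$.

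\textbf{Step 2.} Check that $y\in\mathcal{A}$. The second constraint holds because $\overline{y}=\overline{x}^*\in\overline{\mathcal{A}}$, which is inherited from the feasibility of $x^*$. For the first constraint,
\[
F(\overline{x}^*)+\eta-\tfrac{\rho}{2}\|y\|^2=F(\overline{x}^*)+\eta-\tfrac{\rho}{2}\alpha\le F(\overline{x}^*)+\eta-\tfrac{\rho}{2}\|x^*\|^2\le 0,
\]
using $\alpha\ge\|x^*\|^2$, $\rho>0$, and the feasibility of $x^*$. Hence $y\in\mathcal{A}\cap\operatorname{lev}_{=\alpha}\|\cdot\|^2$, which is exactly what Assumption \ref{assumption:epsilon} requires.

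\textbf{Step 3.} In the sensor setting, $\overline{\mathcal{A}}$ is intersected with the polyhedron $P$ of \eqref{eq:poly}. I must check that $P$ places no bound on the added coordinate. In \eqref{eq:poly}, only the coordinates $x^m$ are restricted (by the sum constraint and the box), and the auxiliary coordinate is unconstrained, so $y\in P$ as well.

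I expect this step to be the main obstacle. The argument works only because the lifting coordinate is left completely free by $P$ and by $\overline{\mathcal{A}}$. If a formulation imposed bounds on it, one would instead need to pick $\alpha\le\|\overline{x}^*\|^2+\sup y_n^2$. I would note this, and rely on the fact that the reformulation in Theorem \ref{th: minnoncon} introduces $y_n$ without constraints.
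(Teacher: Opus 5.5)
Your proof is correct and is essentially the paper's argument: the paper keeps $\overline{x}^*$ fixed and changes only the free last coordinate to $x_n^*+\omega=\sqrt{(x_n^*)^2+v}$ with $v=\alpha-\|x^*\|^2$, which is exactly your $\sqrt{t}$, and then uses $\rho>0$ to conclude that the first constraint only becomes slacker. Your version also covers the boundary case $\alpha=\|x^*\|^2$ (where the paper's $v>0$ fails) and states explicitly that $P$ leaves the lifting coordinate unconstrained, which the paper leaves implicit.
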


\begin{proof}
Let \(\varepsilon > 0\) and let \(\overline{x}\) be a feasible point of Problem \eqref{prob: proj}. Fix \(\varepsilon_1 \in (0, \varepsilon)\). By Theorem \ref{th: minnoncon}, the feasible set of \eqref{prob: proj} is
\[
\mathcal{A} = \overline{\mathcal{A}} \cap \{ x \in \mathbb{R}^n \mid f_1(x) \le 0 \}.
\]
Let \(x^*\) be a global solution of \eqref{prob: proj} and let \(v > 0\) satisfy \(\alpha - v = \|x^*\|^2\). From \eqref{eq: new constr form} we have
\[
f_1(x^*) = F(x_1^*, \dots, x_{n-1}^*) + \eta - \frac{\rho}{2} \|x^*\|^2 \le 0,
\]
where \((\rho, \eta) \in ]0, +\infty[ \times \mathbb{R}\). Note that the set $\overline{\mathcal{A}}$ depends only on the first \(n-1\) components.

Define
\[
\tilde{x} := (x_1^*, \dots, x_{n-1}^*, x_n^* + \omega),
\]
where \(\omega \in \mathbb{R}\) solves \(\omega^2 + 2\omega x_n^* = v\), i.e.,
\[
\omega = -x_n^* + \sqrt{(x_n^*)^2 + v} > 0.
\]
Then
\[
\|\tilde{x}\|^2 = \|x^*\|^2 + \omega^2 + 2\omega x_n^* = \|x^*\|^2 + v \le \alpha.
\]
Clearly \(\tilde{x} \in \overline{\mathcal{A}}\). Substituting into the constraint gives
\[
f_1(\tilde{x}) = f_1(x^*) - \frac{\rho}{2} v \le f_1(x^*) \le 0.
\]
Consequently, if there exists \(v > 0\) such that \(\|x^*\|^2 + v = \alpha\), then \(\mathcal{A} \cap \operatorname{lev}_{=\alpha} \|\cdot\|^2 = \varnothing\). In particular, whenever \(\alpha = \|\overline{x}\|^2 - \varepsilon \ge \|x^*\|^2\), such a \(v > 0\) always exists.
\end{proof}

Algorithm \ref{alg: inv inexact} generates the sequence \((x_k)_{k \in \widehat{\mathbb{L}}}\), where
\[
\widehat{\mathbb{L}} := \{ k \in \mathbb{N} : I(x_k) \neq \emptyset \text{ and } m_k \le \overline{m} \}.
\]

The following lemma characterises the output of Subroutine \ref{alg: outsolver}.

\begin{lemma}
\label{lem: sub1 outcome}
Let Assumption \ref{assumption:epsilon} hold. For any \(k \in \widehat{\mathbb{L}}\), let \(\overline{x}\) be the starting heuristic solution and let \(\operatorname{Opt}(\eqref{outerproblem})\) denote the optimal value of the outer approximation problem \eqref{outerproblem}.
\begin{enumerate}
    \item If Subroutine \ref{alg: outsolver} returns \(\texttt{STOP} = \texttt{FALSE}\), then \(x_{k+1}\) is a global solution of \eqref{outerproblem} and \(\operatorname{Opt}(\eqref{outerproblem}) = \|\overline{x}\|^2 - \varepsilon\).
    \item If Subroutine \ref{alg: outsolver} returns \(\texttt{STOP} = \texttt{TRUE}\), then \(\operatorname{Opt}(\eqref{prob: proj}) > \|\overline{x}\|^2 - \varepsilon\).
\end{enumerate}
\end{lemma}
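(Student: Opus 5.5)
We treat the two outcomes of Subroutine \ref{alg: outsolver} separately. In both cases the target level is $\alpha=\|\overline{x}\|^2-\varepsilon$, and the key tools are the equivalence of Lemma \ref{lem: from out to feas} and the outer-approximation property of Lemma \ref{lem: again}.

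\textbf{First item.} Suppose the subroutine returns $\texttt{STOP}=\texttt{FALSE}$. Then, by construction, $x_{k+1}\in\mathcal{S}_\alpha\cap\mathcal{O}_k^\alpha$.

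We first check that \eqref{outerproblem} has optimal value at least $\alpha$. In Algorithm \ref{alg: inv inexact}, the initial set $\mathcal{O}_{-1}=\{x\mid \|x\|^2\ge\alpha\}$ plays the role of the level-set constraint in \eqref{it: cum}. This set is intersected into every $\mathcal{O}_k$, so every feasible point of \eqref{outerproblem} satisfies $\|x\|^2\ge\alpha$. Since $x_{k+1}$ is feasible and $\|x_{k+1}\|^2=\alpha$, the optimal value equals $\alpha$.

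Lemma \ref{lem: from out to feas} then says that points of $\mathcal{S}_\alpha\cap\mathcal{O}_k^\alpha$ are exactly the global solutions of \eqref{outerproblem} when $\alpha=\operatorname{Opt}(\eqref{outerproblem})$. I expect to use this equivalence directly. For the direction needed here, a short check suffices: on the sphere $\|x\|^2=\alpha$, each quadratic cut $-a_i\|x\|^2+b_i^\top x+c_i\le 0$ is equivalent to the linear cut $b_i^\top x\le a_i\alpha-c_i$. Hence $x_{k+1}$ is feasible for \eqref{outerproblem} and attains the value $\alpha$. This yields $\operatorname{Opt}(\eqref{outerproblem})=\|\overline{x}\|^2-\varepsilon$.

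\textbf{Second item.} Suppose $\texttt{STOP}=\texttt{TRUE}$, so $\mathcal{S}_\alpha\cap\mathcal{O}_k^\alpha=\varnothing$. I argue by contradiction. Assume $\operatorname{Opt}(\eqref{prob: proj})=\|x^*\|^2\le\alpha$, that is, $\|\overline{x}\|^2-\varepsilon\ge\|x^*\|^2$.

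Assumption \ref{assumption:epsilon} then gives a point $\hat x\in\mathcal{A}$ with $\|\hat x\|^2=\alpha$.

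By Lemma \ref{lem: again}, $\mathcal{A}\subset\mathcal{O}(x_j)$ for every $j$ from $r_k$ to $k$, and $\mathcal{A}\subset P$. Therefore $\hat x$ satisfies every quadratic cut defining $\mathcal{O}_k$. It also satisfies $\|\hat x\|^2\ge\alpha$, so $\hat x\in\mathcal{O}_{-1}$.

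Evaluating the quadratic cuts at $\|\hat x\|^2=\alpha$ turns them into exactly the linear inequalities defining $\mathcal{O}_k^\alpha$, including the earlier $\mathcal{O}_t^\alpha$. Hence $\hat x\in\mathcal{S}_\alpha\cap\mathcal{O}_k^\alpha$, contradicting emptiness. We conclude $\operatorname{Opt}(\eqref{prob: proj})>\|\overline{x}\|^2-\varepsilon$.

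\textbf{Main obstacle.} The delicate point is the bookkeeping of which cuts make up $\mathcal{O}_k^\alpha$. The cuts built at earlier iterates $x_t$, for $t$ from $r_k$ to $k-1$, must also be valid for $\mathcal{A}$. Lemma \ref{lem: again} is stated for the cumulative intersection, so this should go through. One must also ensure that $\mathcal{O}_{-1}$ does not exclude points of norm exactly $\alpha$; it does not, since its inequality $\|x\|^2\ge\alpha$ is non-strict.
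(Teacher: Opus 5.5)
Your proof is correct and follows the paper's argument. Item 1 goes through the sphere/polyhedron equivalence of Lemma \ref{lem: from out to feas}, and item 2 is a contradiction using Assumption \ref{assumption:epsilon} together with the outer-approximation property of Lemma \ref{lem: again}. Your explicit check that the level-set constraint $\mathcal{O}_{-1}$ forces $\operatorname{Opt}(\eqref{outerproblem})\ge\alpha$ is a useful addition: the paper only says item 1 ``follows directly'' from Lemma \ref{lem: from out to feas}, whose hypothesis $\alpha=\operatorname{Opt}(\eqref{outerproblem})$ is exactly what you verify.
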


\begin{proof}
\begin{enumerate}
    \item If \(\texttt{STOP} = \texttt{FALSE}\), the claim follows directly from Lemma \ref{lem: from out to feas}.
    \item If \(\texttt{STOP} = \texttt{TRUE}\), then the feasibility problem in Lemma \ref{lem: from out to feas} has no solution, i.e., \(\mathcal{S}_\alpha \cap \mathcal{O}_k^\alpha = \varnothing\). Hence \(\operatorname{Opt}(\eqref{outerproblem}) \neq \alpha\) by Lemma \ref{lem: from out to feas}.

    Suppose for contradiction that \(\operatorname{Opt}(\eqref{prob: proj}) \le \alpha\). By Assumption \ref{assumption:epsilon} there exists \(x \in \mathcal{A}\) with \(\|x\|^2 = \alpha\). Then \(x \in \mathcal{A} \cap \mathcal{S}_\alpha\). From the definition of the feasible set of \eqref{outerproblem} and Lemma \ref{lem: again} we obtain \(x \in \mathcal{O}_k \cap \mathcal{S}_\alpha = \mathcal{O}_k^\alpha \cap \mathcal{S}_\alpha\), which contradicts the emptiness of the intersection. 
\end{enumerate}
\end{proof}

\begin{proposition}
\label{prop:inexactbis}
Consider Problem \eqref{prob: proj}. Let \(\overline{x}\) be a feasible heuristic solution, let \(\varepsilon > 0\) and set \(\alpha = \|\overline{x}\|^2 - \varepsilon\). Let \(\overline{m} \in \mathbb{N}\) and suppose that Assumption \ref{assumption:epsilon} holds. Let \((\mathcal{O}_{k-1}, x_k)_{k \in \mathbb{L}}\) be the sequence generated by Algorithm \ref{alg: inv inexact}.
\begin{enumerate}
    \item For every \(k \in \mathbb{L}\) we have \(\mathcal{O}_{k-1} \subseteq \mathcal{O}_{k-2} \subseteq \cdots \subseteq \mathcal{O}_0\).
    \item The sequence \(\{\|x_k\|^2\}_{k \in \mathbb{L}}\) is constant and equal to \(\alpha\).
    \item For every \(k \in \mathbb{L}\), \(I(x_k) = \varnothing\) if and only if \(x_k \in \mathcal{A}\) and \(\|x_k\|^2 = \alpha\).
    \item For every \(k \in \mathbb{L}\), the number of constraints in \(\mathcal{O}_{k-1}\) never exceeds \(\overline{m}\).
    \item There exists an iteration \(k^* \in \mathbb{N}\) at which Algorithm \ref{alg: inv inexact} terminates.
    \item We have \(x_{k^*} \in \operatorname{lev}_\alpha \|\cdot\|^2\), and one of the following holds:
    \begin{itemize}
        \item \(x_{k^*} \in \mathcal{A}\), i.e., \(x_{k^*}\) is feasible and satisfies \(\|x_{k^*}\|^2 = \|\overline{x}\|^2 - \varepsilon\);
        \item \(m_{k^*} > \overline{m}\);
        \item \(\overline{x}\) is an \(\varepsilon\)-global solution of Problem \eqref{prob: proj}.
    \end{itemize}
\end{enumerate}
\end{proposition}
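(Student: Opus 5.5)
I will prove the six items essentially in the order stated, since each relies on the construction in Algorithm \ref{alg: inv inexact} and on Lemmas \ref{lem: again}, \ref{lem: from out to feas} and \ref{lem: sub1 outcome}.

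For item (1), I will observe that in Algorithm \ref{alg: inv inexact} there is never a restart: the lower level is fixed once and for all by $\mathcal{O}_{-1}=\{x:\|x\|^2\ge\alpha\}$, so every iteration is cumulative in the sense of \eqref{it: cum} with $r_k=0$. Hence $\mathcal{O}_k=\mathcal{O}(x_k)\cap\mathcal{O}_{k-1}$, and the nesting follows by induction on $k$.

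For item (2), I will use that $x_{k+1}$ is produced by Subroutine \ref{alg: outsolver} only when $\texttt{STOP}=\texttt{FALSE}$, in which case $x_{k+1}\in\mathcal{S}_\alpha$ by construction. When $\texttt{STOP}=\texttt{TRUE}$, we set $x_{k+1}=x_k$ and the loop exits. Thus every generated iterate with $k\ge1$ lies on $\mathcal{S}_\alpha$. The index $k=0$, where $x_0=\overline{x}$, has to be treated separately, or excluded from $\mathbb{L}$ by convention; I will state this explicitly. Item (4) is immediate from the loop guard $m_k\le\overline{m}$.

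For item (3), I will use the definition of $I(x_k)$ as the set of violated constraints. Then $I(x_k)=\varnothing$ if and only if $f_i(x_k)\le0$ for all $i$. Together with $x_k\in P$ (by construction of $\mathcal{O}_k^\alpha\subset P$), this is exactly $x_k\in\mathcal{A}$. The level condition $\|x_k\|^2=\alpha$ comes from item (2).

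For item (5), termination, I expect the main obstacle. My plan is to exploit the compactness of $\mathcal{S}_\alpha\cap P$. Suppose the algorithm never stops. By item (1) and Lemma \ref{lem: again}, for $j<k$ we have $x_k\in\mathcal{O}(x_j)$, while $x_j\notin\mathcal{O}(x_j)$. On the sphere the cuts are affine: by Lemma \ref{lem: from out to feas}, $b_{i,j}^\top x\le a_i\alpha-c_{i,j}$. At $x_j$ this affine function exceeds the bound by $f_i(x_j)>0$. Continuity of the $f_i$ on a neighbourhood of the compact set gives bounded subgradients $b_{i,j}$. Hence
\[
\|x_k-x_j\|\ge f_i(x_j)/\|b_{i,j}\|.
\]
A convergent subsequence then forces $f_i(x_j)\to0$, and I would conclude from lower semicontinuity that the limit point is feasible.

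This argument shows only asymptotic feasibility, not finite termination. I will therefore fall back on the safeguard: every iteration adds at least one constraint, so $m_k$ grows strictly and exceeds $\overline{m}$ after finitely many steps. This yields a finite $k^*$, and I consider it the honest route.

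Item (6) is then a case analysis at $k^*$, where exactly one exit condition of the loop triggered:
\begin{enumerate}
    \item If $I(x_{k^*})=\varnothing$, item (3) gives feasibility on level $\alpha$.
    \item If $m_{k^*}>\overline{m}$, this is the second alternative.
    \item If $\texttt{STOP}=\texttt{TRUE}$, Lemma \ref{lem: sub1 outcome}(2) gives $\operatorname{Opt}(\eqref{prob: proj})>\|\overline{x}\|^2-\varepsilon$, which is precisely \eqref{eps_glob} for $\overline{x}$.
\end{enumerate}
In each case $x_{k^*}\in\operatorname{lev}_\alpha\|\cdot\|^2$ by item (2).
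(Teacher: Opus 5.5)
Your proof is correct and follows the paper's own argument. Items (1)--(4) are read off the construction of Algorithm \ref{alg: inv inexact}. Termination in (5) comes from the strict growth $m_{k+1}\ge m_k+1$ against the safeguard $\overline{m}$; the compactness detour you abandon is not needed. Item (6) is the same exit-condition case analysis, with Lemma \ref{lem: sub1 outcome}(2) certifying $\varepsilon$-optimality. Your caveat about $k=0$, where $x_0=\overline{x}$ is feasible but not on $\mathcal{S}_\alpha$, points to a looseness in the statement and the algorithm that the paper's proof shares, not a defect specific to your argument.
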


\begin{proof}
\begin{enumerate}
    \item Follows from the fact that Algorithm \ref{alg: inv inexact} accumulates constraints.
    \item Subroutine \ref{alg: outsolver} applied at iteration \(k-1\) either returns a point with \(\|x_k\|^2 = \alpha\) or sets \(\texttt{STOP} = \texttt{TRUE}\).
    \item The set \(I(x_k)\) contains the indices of the constraints \(f_i\) that are violated at \(x_k\). Hence \(I(x_k) = \varnothing\) if and only if \(x_k \in \mathcal{A}\). Moreover, \(\|x_k\|^2 = \alpha\) by the previous item.
    \item This follows directly from the construction of the algorithm.
    \item If Subroutine \ref{alg: outsolver} returns \(\texttt{STOP} = \texttt{TRUE}\) at iteration \(k^*-1\), the algorithm stops at \(k^*\). If \(\texttt{STOP} = \texttt{FALSE}\) for all iterations and there exists some \(k^*\) with \(I(x_{k^*}) = \varnothing\), the algorithm also stops. Otherwise, since \(x_k \notin \mathcal{A}\) for all \(k \in \mathbb{L}\), the number of constraints satisfies \(m_{k+1} \ge m_k + 1\) by \eqref{it: cum}. Hence there must exist an iteration where \(m_{k^*} > \overline{m}\), at which point the algorithm terminates.
    \item The equality \(\|x_{k^*}\|^2 = \|\overline{x}\|^2 - \varepsilon\) follows from item (ii). The algorithm stops when either \(m_{k^*} > \overline{m}\), \(I(x_{k^*}) = \varnothing\), or \(\texttt{STOP} = \texttt{TRUE}\). In the case \(I(x_{k^*}) = \varnothing\), item (iii) implies that \(\|\overline{x}\|^2 - \varepsilon = \|x^*\|^2\). When \(\texttt{STOP} = \texttt{TRUE}\), Lemma \ref{lem: sub1 outcome} shows that \(\overline{x}\) is an \(\varepsilon\)-global solution.
\end{enumerate}
\end{proof}

When \(\|\overline{x}\|^2 - \varepsilon < \operatorname{Opt}(\eqref{prob: proj})\), the point \(\overline{x}\) is already an \(\varepsilon\)-solution, but it may not be possible to certify this if the number of constraints exceeds \(\overline{m}\) at termination. The next proposition shows that there always exists a sufficiently large \(\overline{m}\) for which \(\texttt{STOP} = \texttt{TRUE}\) (which, by Lemma \ref{lem: sub1 outcome}, certifies that \(\overline{x}\) is an \(\varepsilon\)-global solution).

\begin{proposition}
\label{always}
Consider Problem \eqref{prob: proj}. Let \(\overline{x}\) be a feasible heuristic solution and let \(\varepsilon > 0\). Define \(\alpha := \|\overline{x}\|^2 - \varepsilon\). Suppose that Assumption \ref{assumption:epsilon} holds and that there exists \(\tau \ge 0\) with \(\|\overline{x}\|^2 - \varepsilon < \tau\) such that the functions \(f_i\), \(i=1,...,m\) are continuous on an open set containing \(\operatorname{lev}_{\le \tau} \|\cdot\|^2\).

Let \((\mathcal{O}_{k-1}, x_k)_{k \in \mathbb{L}}\) be generated by Algorithm \ref{alg: inv inexact} and assume that \(\|\overline{x}\|^2 - \varepsilon < \operatorname{Opt}(\eqref{prob: proj})\). Then there exists \(k^* \in \mathbb{N}\) such that \(\texttt{STOP} = \texttt{TRUE}\) at iteration \(k^* - 1\).
\end{proposition}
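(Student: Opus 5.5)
Since \(\overline{m}\) is not fixed in this statement, we may take it as large as needed and show that the algorithm cannot run forever with \(\texttt{STOP} = \texttt{FALSE}\).

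Suppose, for contradiction, that Subroutine \ref{alg: outsolver} returns \(\texttt{STOP} = \texttt{FALSE}\) at every iteration. By Proposition \ref{prop:inexactbis}(ii), all iterates \(x_k\) lie on the sphere \(\mathcal{S}_\alpha\). Since \(\alpha < \operatorname{Opt}(\eqref{prob: proj})\), no \(x_k\) is feasible, so \(I(x_k) \neq \varnothing\) for every \(k\). The iterates lie in the compact set \(\mathcal{S}_\alpha \cap P\), so we extract a convergent subsequence \(x_{k_j} \to \hat{x}\), with \(\|\hat{x}\|^2 = \alpha < \tau\). Because \(\alpha < \operatorname{Opt}(\eqref{prob: proj})\), the point \(\hat{x}\) is infeasible, and there is an index \(i\) with \(f_i(\hat{x}) = \delta_0 > 0\). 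By continuity of \(f_i\) on an open neighbourhood of \(\operatorname{lev}_{\le \tau}\|\cdot\|^2\), we get \(f_i(x_{k_j}) \ge \delta_0/2\) for all large \(j\). In particular \(i \in I(x_{k_j})\), so the cut
\[
q_{i,k_j}(x) = -a_i\|x\|^2 + b_{i,k_j}^\top x + c_{i,k_j} \le 0
\]
is present in every later outer set. This holds because the algorithm never restarts: \(r_k = 0\) and all cuts are accumulated, by \eqref{it: cum} and Proposition \ref{prop:inexactbis}(i).

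The key estimate uses two facts. First, by construction, \(q_{i,k_j}(x_{k_j}) = f_i(x_{k_j}) \ge \delta_0/2\). Second, the subgradients \(b_{i,k_j} \in \partial(f_i + a_i\|\cdot\|^2)(x_{k_j})\) are uniformly bounded, because a convex function that is continuous on an open set is locally Lipschitz, hence its subdifferential is bounded on the compact set \(\mathcal{S}_\alpha \cap P\); call the bound \(M\). Then the quadratic \(q_{i,k_j}\) is uniformly Lipschitz on \(\mathcal{S}_\alpha\) with constant \(L := 2a_i\sqrt{\alpha} + M\). It follows that \(q_{i,k_j}(x) > 0\) whenever \(\|x - x_{k_j}\| < \delta_0/(2L)\).

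Now take \(l > j\), both large. The point \(x_{k_l}\) lies in \(\mathcal{O}_{k_l - 1}\), which contains the cut generated at \(x_{k_j}\). Hence \(\|x_{k_l} - x_{k_j}\| \ge \delta_0/(2L)\). This contradicts the fact that \((x_{k_j})\) is Cauchy. So \(\texttt{STOP} = \texttt{TRUE}\) must occur at some finite iteration \(k^*-1\), and it suffices to choose \(\overline{m}\) larger than \(m_{k^*}\).

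The main obstacle is obtaining the uniform bound \(M\) on the subgradients \(b_{i,k}\). I would derive it from the continuity assumption on the open set containing \(\operatorname{lev}_{\le \tau}\|\cdot\|^2\), in the same way as the analogous argument in \cite{bednarczuk2026outer}. A related delicate point is that the constraint index \(i\) may vary with \(k\); extracting a further subsequence along which the same \(i\) recurs resolves this, since there are only finitely many constraints.
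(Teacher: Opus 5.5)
Your proof is correct, and it takes a different route from the paper's.

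Both arguments start the same way: assume $\texttt{STOP}=\texttt{FALSE}$ forever and note that the iterates are infeasible points of $\mathcal{S}_\alpha$. The paper then identifies its iterates with those of \cite[Algorithm 1]{bednarczuk2026outer}, since each $x_{k+1}$ is a global solution of \eqref{outerproblem} by Lemma \ref{lem: from out to feas}. It invokes \cite[Theorem 2]{bednarczuk2026outer} to obtain a cluster point that is a global solution of \eqref{prob: proj}, which is impossible on $\mathcal{S}_\alpha$ because $\alpha<\operatorname{Opt}(\eqref{prob: proj})$.

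You instead prove directly the one ingredient that is actually needed: cluster points must be feasible. You use a violated constraint at the limit, continuity, and a uniform subgradient bound for $f_i+a_i\|\cdot\|^2$ on the compact sphere. Together these give a fixed separation radius $\delta_0/(2L)$ around each cut point, which is incompatible with convergence.

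The paper's route is shorter. However, it depends on matching its iterates with those of the reference's Algorithm 1, where here the outer sets also carry $P$ and $\mathcal{O}_{-1}=\{\|x\|^2\ge\alpha\}$, and on the hypotheses of that Theorem 2. Your route is self-contained. It uses the continuity hypothesis on $\operatorname{lev}_{\le\tau}\|\cdot\|^2$ exactly where it is needed, namely $\alpha<\tau$ so that $\mathcal{S}_\alpha$ sits inside the open set. Because you need feasibility rather than optimality of cluster points, you never use validity of the cuts ($\mathcal{A}\subset\mathcal{O}_k$); you only use that each cut takes the value $f_i(x_k)>0$ at its generating point.

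Your route can also be made quantitative. Since $\max_i f_i$ is continuous and strictly positive on the compact set $\mathcal{S}_\alpha\cap P$, $\delta_0$ can be chosen uniformly. All iterates are then pairwise separated by a fixed distance, and a packing bound on $\mathcal{S}_\alpha$ gives an a priori bound on $k^*$, hence an explicit admissible $\overline{m}$.

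Finally, your concern about the index $i$ varying with $k$ does not arise. You fix $i$ from the limit $\hat{x}$, and continuity then gives $i\in I(x_{k_j})$ for all large $j$, so the extra subsequence extraction is unnecessary.
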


\begin{proof}
Algorithm \ref{alg: inv inexact} performs only cumulative iterations. Suppose for contradiction that \(\texttt{STOP} = \texttt{FALSE}\) for all \(k \in \mathbb{L}\). Then, for every \(k \in \mathbb{L}\), the point \(x_{k+1}\) produced by Subroutine \ref{alg: outsolver} lies in \(\mathcal{S}_\alpha \cap \mathcal{O}_k^\alpha\) and is therefore a global solution of \eqref{outerproblem} by Lemma \ref{lem: from out to feas}. Consequently, the sequence generated by Algorithm \ref{alg: inv inexact} coincides with that of \cite[Algorithm 1]{bednarczuk2026outer}.

However, no iterate \(x_{k+1}\) can belong to \(\mathcal{A}\), because
\[
\|x_{k+1}\|^2 = \alpha = \|\overline{x}\|^2 - \varepsilon < \operatorname{Opt}(\eqref{prob: proj}) = \|x^*\|^2
\]
for any global solution \(x^*\) of \eqref{prob: proj}. Thus \(\mathbb{L} = \mathbb{N}\) and \(\{x_k\}_{k \in \mathbb{L}}\) is infinite. By \cite[Theorem 2]{bednarczuk2026outer} there exists a convergent subsequence whose limit is a global solution of \eqref{prob: proj}. This contradicts the fact that every \(x_{k+1}\) lies in \(\mathcal{S}_\alpha \cap \mathcal{O}_k^\alpha\), since \(\mathcal{S}_\alpha = \{x \in \mathbb{R}^n \mid \|x\|^2 = \alpha\}\).
\end{proof}

\begin{remark}
\label{rem}
If Algorithm \ref{alg: inv inexact} returns a feasible solution \(\tilde{x}\) satisfying \(\|\tilde{x}\|^2 = \|\overline{x}\|^2 - \varepsilon\), we may restart the algorithm with \(\overline{x} := \tilde{x}\) in order to search for an even better solution.
\end{remark}

				\section{Experiments}
	\label{sec: exp}
	
	\subsection{Explanation of the Dataset Structure}
\label{sec:dataset}

The datasets are stored in \texttt{.npz} files. These files contain hundreds of virtual wind-tunnel tests performed with the program XFOIL \cite{Drela1989} using Mach $0.1$ and Reynolds number $200000$. The program simulated the airflow over the airfoils NACA 2412, NACA 2418, NACA 0012 and NACA 0018 at various angles of attack, recorded the pressure distribution on the surface, and saved the results in a clean, structured format suitable for machine learning and further analysis.

For the training dataset we consider 500 equally spaced angles of attack (in degrees) ranging from \(-7.0^\circ\) to \(+7.0^\circ\). For the test dataset we use 100 equally spaced angles of attack in the same range.

\begin{itemize}
    \item \textbf{Shape}:
    \begin{itemize}
        \item Training file: \(500\) rows \(\times 160\) columns
        \item Test file: \(100\) rows \(\times 160\) columns
    \end{itemize}
    \item Each \textbf{row} corresponds to one specific angle of attack.
    \item Each \textbf{column} corresponds to one of the 160 surface points on the airfoil.
    \item The stored values are the \textbf{pressure coefficient} (\(C_p\)) at the respective surface points.
\end{itemize}

In short, for every tested angle of attack the program recorded the pressure distribution across the 160 surface points.

Each airfoil is associated with two files. For the NACA 0012 airfoil, for example, we have:
\begin{itemize}
    \item \texttt{naca0012\_train\_500.npz} (used for training)
    \item \texttt{naca0012\_test\_100.npz} (used for testing and validation)
\end{itemize}

\subsection{Sensor selection procedure (training process)}
\label{sec:train}

The training process takes the training dataset as input and returns the optimal sensor positions as output.

The pressure distribution on each airfoil is represented by XFOIL through 160 discrete measurements corresponding to 160 possible sensor locations. We assume that a sensor can be placed at any of these 160 points. The sensor configuration is encoded by a binary vector \(z \in \mathbb{R}^{160}\) whose entries satisfy \(z_j = 1\) if a sensor is placed at location \(j\) and \(z_j = 0\) otherwise.

As described in Section \ref{sec: full procedure}, the training procedure consists of three phases:
\begin{enumerate}
    \item Apply proper orthogonal decomposition (POD) to the training dataset (as described in \cite{manohar2017data}, also known as principal component analysis). This produces a new matrix \(A \in \mathbb{R}^{p \times 160}\), where \(p\) is the prescribed number of sensors.
    
    \item Formulate a nonconvex optimisation problem to determine the optimal sensor locations, following the approach of Section \ref{sec: reform}. The objective is to select \(p\) columns of \(A\) that form the matrix \(A_p \in \mathbb{R}^{p \times p}\). Two models are considered: one that minimises \(-\log\det(A_p)\) and one that minimises the condition number of \(A_p\).
    
    \item Solve the resulting nonconvex problem to \(\varepsilon\)-global optimality using the Inexact Cutting Sphere algorithm \cite[Algorithm 3]{bednarczuk2026outer}, or start from a heuristic solution \(\overline{z}\) (e.g., obtained by QDEIM) and improve it with Algorithm \ref{alg: inv inexact}.
\end{enumerate}

\subsection{Test signal reconstruction (validation)}

Using the notation of Section \ref{sec: full procedure}, let \(\omega_s \in \mathbb{R}^{160}\), \(s = 1, \dots, 100\), denote the pressure distributions (signals) contained in the test dataset. Validation consists in taking the sensor measurements \(y_s \in \mathbb{R}^p\) and reconstructing the full signals \(\omega_s\) via the least-squares method described in Section \ref{sec: full procedure}.

\subsection{Results for the Inexact Cutting Sphere algorithm}

We first test the Inexact Cutting Sphere algorithm \cite[Algorithm 3]{bednarczuk2026outer} on problem \eqref{prob: SL final}. After determining the optimal sensor positions for a given training dataset of a particular NACA airfoil, we reconstruct the 100 pressure distributions of the corresponding test set, obtaining the approximations \(\overline{\omega}_s\), \(s = 1, \dots, 100\).

The total reconstruction error is defined as
\[
\text{error tot CS} = \sum_{s=1}^{100} \|\omega_s - \overline{\omega}_s\|.
\]

For comparison, we also consider the heuristic sensor positions obtained with the QDEIM method in the reduced-order modelling setting of \cite{manohar2017data}. For each airfoil we record:
\begin{itemize}
    \item The sensor positions \(\tilde{z} \in \mathbb{R}^{160}\) returned by QDEIM for 3 and 5 sensors: $[55, 82, 96]$, $[14, 28, 56, 82, 96]$ for NACA 2418, $[63, 80, 99]$ for NACA 0018, $[61, 81, 89]$ and $[ 62 , 80 , 90, 100, 102]$ for NACA 2412, $[71, 80, 93]$ and $[ 57,  70,  80, 102, 159]$ for NACA 0012.
    \item The matrix \(A_{\text{QDEIM}}\) formed by the corresponding columns of the POD basis matrix \(A\).
    \item The reconstructed pressure distributions \(\tilde{\omega}_s\), \(s = 1, \dots, 100\).
    \item The vector \(\text{VS QDEIM} = [a, b]\), where \(a = -\log\det(A_{\text{QDEIM}})\) (or the condition number of \(A_{\text{QDEIM}}\)) and
    \[
    b = \sum_{s=1}^{100} \max\bigl\{0,\ \|\omega_s - \tilde{\omega}_s\|^2 - \|\omega_s - \overline{\omega}_s\|^2\bigr\}.
    \]
    The integer \(b\) counts how many test snapshots are reconstructed more accurately by the cutting-sphere solution than by QDEIM.
    \item The total reconstruction error \(\text{error tot QDEIM} = \sum_{s=1}^{100} \|\omega_s - \tilde{\omega}_s\|\).
\end{itemize}

All experiments were performed on a Microsoft Windows 10 Pro workstation equipped with 128 GB of DDR4 RAM and an AMD Ryzen 9 3950X 16-core processor running at 3.5 GHz.

The cutting-sphere algorithm was initialised from a feasible point \(x_0\) satisfying \(\|x_0\|^2 = p\), where \(p\) is the number of sensors. For each model (\eqref{prob: SL final} or \eqref{prob: SL3 final}) we report the objective value, the Euclidean norm of the solution, and the computational time. The results are summarised in the tables below.

\begin{table}[htbp]
\centering
\small
\caption{Optimal positions for 3 sensors and results obtained with \cite[Algorithm 3]{bednarczuk2026outer} applied to \eqref{prob: SL final}.}
\label{tab:sensor-placement 3 logdet}
\begin{tabular}{l | c | l | r | r | r | r | r}
\hline
Airfoil & \(\varepsilon\) & Sensors Position & \(-\log\det\) & \(\|\cdot\|^2\) & Time & VS QDEIM & error tot CS \\
 &  & by Cutting Sphere &   &  &  & (obj.; score) & / QDEIM \\
\hline
2418 & 0.01 & [51, 78, 93] & 9.785687 & 9.01 & 53 s & 9.884290; 69 & 30.636 / 30.648 \\
2418 & 0.10 & [51, 79, 94] & 9.786872 & 9.10 & 16 s & 9.884290; 66 & 30.639 / 30.648 \\
0018 & 0.10 & [60, 79, 98] & 9.874790 & 9.10 & 30 s & 9.882912; 75 & 30.443 / 30.450 \\
0018 & 0.01 & [60, 79, 98] & 9.874790 & 9.10 & 35 s & 9.882912; 75 & 30.443 / 30.450 \\
0012 & 0.01 & [69, 80, 92] & 8.972200 & 9.01 & 30 s & 9.022418; 47 & 28.343 / 28.310 \\
0012 & 0.001 & [69, 80, 92] & 8.972200 & 9.004 & 56 s & 9.022418; 47 & 28.343 / 28.310 \\
2412 & 0.10 & [55, 78, 87] & 8.945973 & 9.10 & 2 s & 9.109868; 95 & 32.864 / 34.609 \\
2412 & 0.01 & [56, 78, 87] & 8.945817 & 9.01 & 16 s & 9.022418; 96 & 32.882 / 34.609 \\
\hline
\end{tabular}
\end{table}

\begin{table}[htbp]
\centering
\small
\caption{Optimal positions for 5 sensors and results obtained with \cite[Algorithm 3]{bednarczuk2026outer} applied to \eqref{prob: SL final}.}
\label{tab:sensor-placement 5 logdet}
\begin{tabular}{l | c | l | r | r | r | r | r}
\hline
Airfoil & \(\varepsilon\) & Sensors Position & \(-\log\det\) & \(\|\cdot\|^2\) & Time & VS QDEIM & error tot \\
 &  & by Cutting Sphere &   &  &  & (obj.; score) & \\
\hline
2418 & 0.10 & [13, 27, 55, 81, 95] & 14.785104 & 25.1 & 2025 s & 14.851554; 61 & 24.094 / 24.720 \\
0018 & 0.10 & [8, 62, 80, 99, 151] & 15.564481 & 25.1 & 659 s & 15.585886; 46 & 23.874 / 23.978 \\
0012 & 0.10 & [0, 57, 75, 85, 102] & 14.616239 & 25.4 & 1370 s & 14.908424; 57 & 30.396 / 32.930 \\
2412 & 0.10 & [62, 80, 90, 100, 102] & 11.827939 & 25.1 & 175 s & 11.827939; -- & 24.016 / 24.016 \\
\hline
\end{tabular}
\end{table}

\begin{table}[htbp]
\centering
\small
\caption{Optimal positions for 3 sensors and results obtained with \cite[Algorithm 3]{bednarczuk2026outer} applied to \eqref{prob: SL3 final}.}
\label{tab:sensor-placement 3 condition number}
\begin{tabular}{l | c | l | r | r | r | r | r}
\hline
Airfoil & \(\varepsilon\) & Sensors Position & Cond. Number & \(\|\cdot\|^2\) & Time & VS QDEIM & error tot \\
 &  & by Cutting Sphere &   &  &  & (obj.; score) & \\
\hline
2418 & 5 & [49, 77, 96] & 1.99206 & 8 & 4 s & 2.57215; 65 & 30.639 / 30.648 \\
0018 & 5 & [57, 80, 102] & 1.962025 & 8 & 86 s & 2.687235; 95 & 30.430 / 30.450 \\
0012 & 10 & [62, 79, 95] & 2.930838 & 13 & 258 s & 5.073709; 66 & 28.278 / 28.310 \\
2412 & 20 & [55, 78, 89] & 3.259366 & 23 & 348 s & 4.266402; 95 & 33.887 / 34.609 \\
\hline
\end{tabular}
\end{table}

The results in Tables \ref{tab:sensor-placement 3 logdet}--\ref{tab:sensor-placement 3 condition number} demonstrate that the Inexact Cutting Sphere algorithm \cite[Algorithm 3]{bednarczuk2026outer} consistently outperforms the QDEIM heuristic, both in terms of the objective value of the sensor-selection problem and in reconstruction accuracy.
The only exception occurs for the NACA 2412 airfoil with 5 sensors (Table \ref{tab:sensor-placement 5 logdet}), where both methods return the same sensor positions.

These experiments also confirm that the QDEIM heuristic already produces solutions of high quality. 

The cutting-sphere approach is computationally expensive. It is therefore essential to balance the desired precision \(\varepsilon\) against the computational budget. In our implementation we terminate the algorithm when the number of constraints of problem \eqref{outerproblem} exceeds the safety limit of 3000, i.e. we set $\overline{m}=3000$.

In the following experiment we instead attempt to improve a known feasible solution using the Inverse Cutting Sphere algorithm (Algorithm \ref{alg: inv inexact}).

\subsection{Results for the Inverse Cutting Sphere algorithm}

We test Algorithm \ref{alg: inv inexact} on the NACA 2412 dataset using problem \eqref{prob: SL final} with 5 sensors. As starting point we take the feasible solution \(\overline{x}\) obtained by the Inexact Cutting Sphere algorithm in Table \ref{tab:sensor-placement 5 logdet}, which satisfies \(\|\overline{x}\|^2 = 25.1\) (which could not improve the solution found by the QDEIM heuristic).

Note that the first 160 components of \(\overline{x}\) are binary (indicating sensor placement), while the last component has a different meaning. Consequently, the initial point \(x_0\) for the Inverse Cutting Sphere algorithm (Algorithm \ref{alg: inv inexact}) has its first 160 entries identical to those of \(\overline{x}\), and its last entry is chosen so that
\[
\|x_0\|^2 = \|\overline{x}\|^2 - \varepsilon = 25.1 - \varepsilon.
\]

We performed the following three tests:

\begin{enumerate}
    \item With \(\varepsilon = 0.1\) the algorithm terminated after 21 iterations (9 seconds) and correctly certified that \(\overline{x}\) is already a global \(0.1\)-optimal solution, confirming the result of the Inexact Cutting Sphere algorithm.
    
    \item With \(\varepsilon = 0.09\) the algorithm returned a feasible point satisfying \(\|x_0\|^2 = 25.01\). While this constitutes an improvement over the \(\varepsilon = 0.1\) solution, the sensor positions remained unchanged because of the specific construction of the initial point.
    
    \item With \(\varepsilon = 0.095\), Algorithm \ref{alg: inv inexact} discovered a new feasible solution whose properties are reported in Table \ref{tab:INV}.
\end{enumerate}

\begin{table}[htbp]
\centering
\small
\caption{Optimal sensor positions found by the Inverse Cutting Sphere algorithm (Algorithm \ref{alg: inv inexact}) on the NACA 2412 dataset with \(\varepsilon = 0.095\).}
\label{tab:INV}
\begin{tabular}{l | r | r | r | r | r}
\hline
Sensors Position & \(-\log\det\) & \(\|\cdot\|^2\) & Time & VS QDEIM & error tot \\
 &  &  &  & (obj.; score) & \\
\hline
[62, 80, 91, 100, 102] & 11.822875 & 25.005 & 9 s & 11.827939; 55 & 23.983 / 24.016 \\
\hline
\end{tabular}
\end{table}

\begin{figure*}
    \raggedright
    \begin{minipage}[t]{0.48\textwidth}
        \centering
        \includegraphics[width=\textwidth, height=0.34\textheight, keepaspectratio]{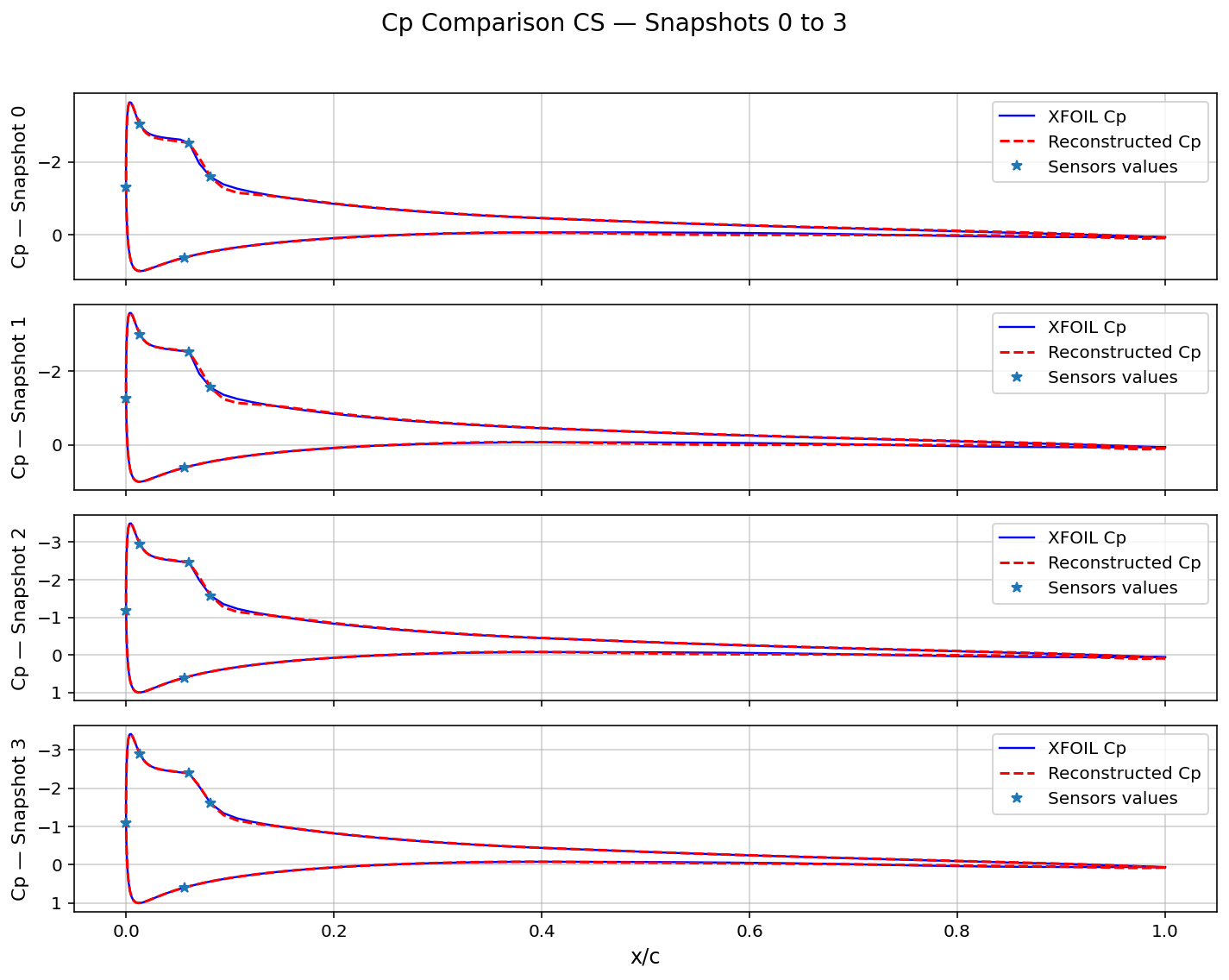}
    \end{minipage}
    \hfill
    \begin{minipage}[t]{0.48\textwidth}
        \centering
        \includegraphics[width=\textwidth, height=0.34\textheight, keepaspectratio]{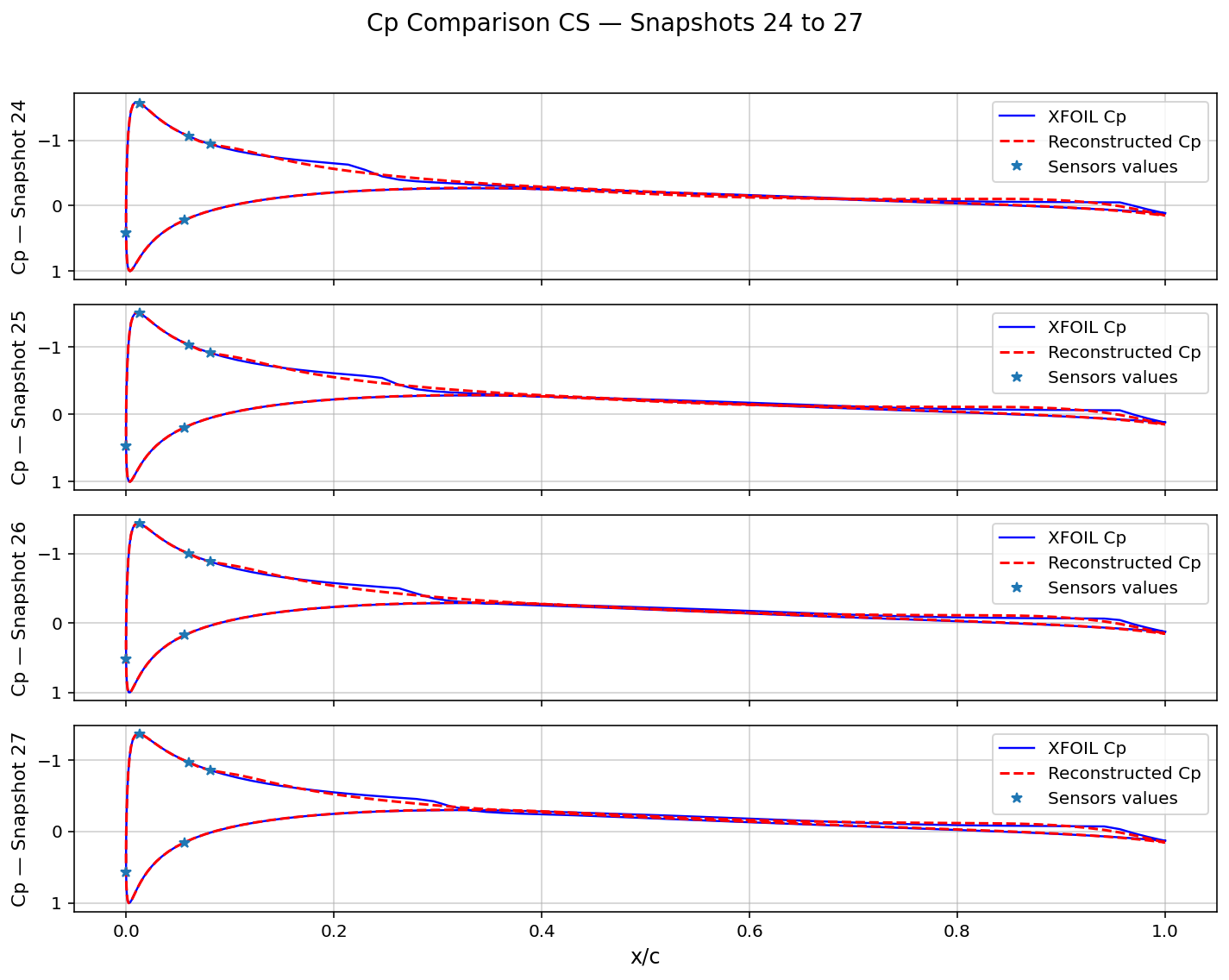}
    \end{minipage}
    
    \vspace{0.7cm}
    
    \begin{minipage}[t]{0.48\textwidth}
        \centering
        \includegraphics[width=\textwidth, height=0.34\textheight, keepaspectratio]{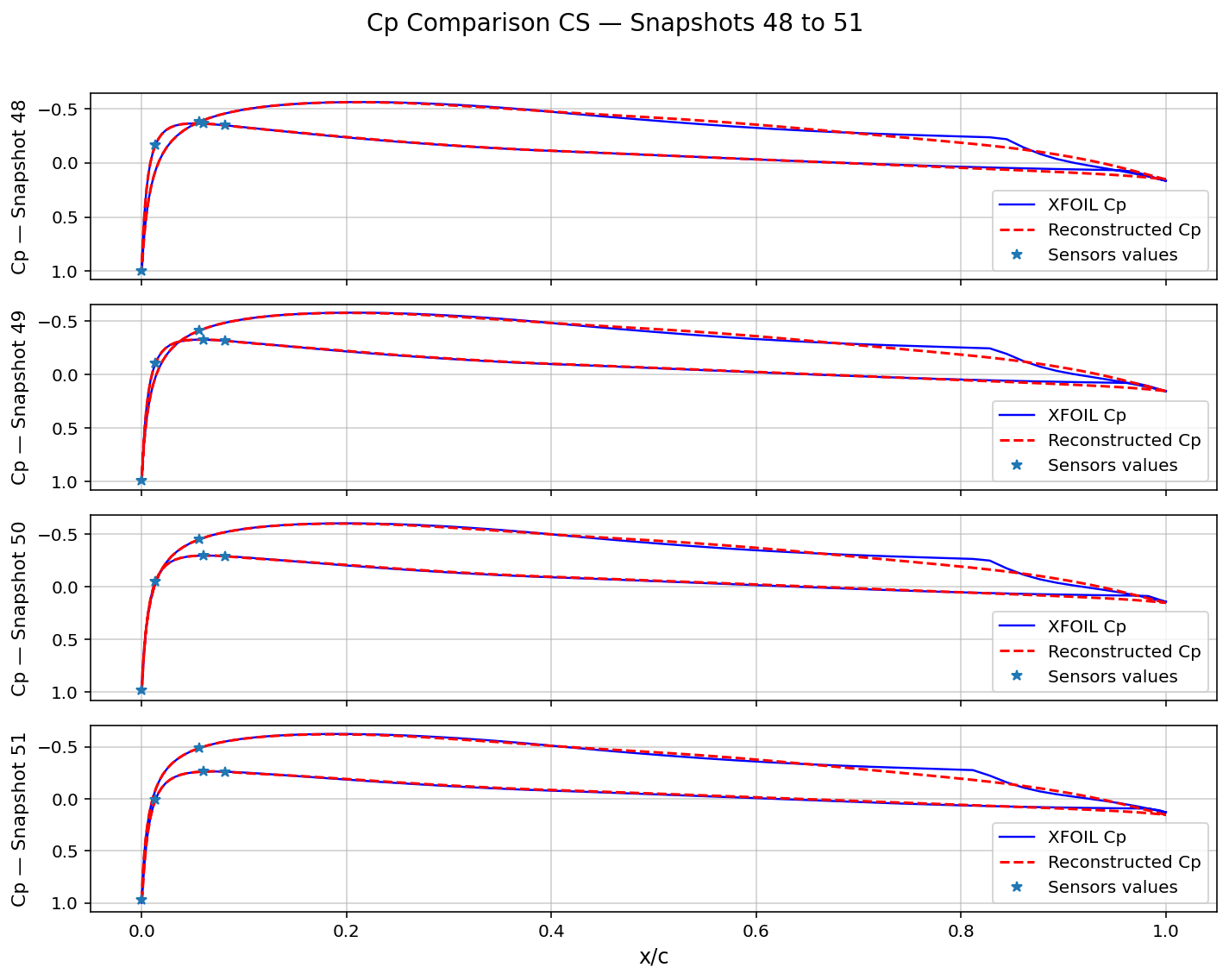}
    \end{minipage}
    \hfill
    \begin{minipage}[t]{0.48\textwidth}
        \centering
        \includegraphics[width=\textwidth, height=0.34\textheight, keepaspectratio]{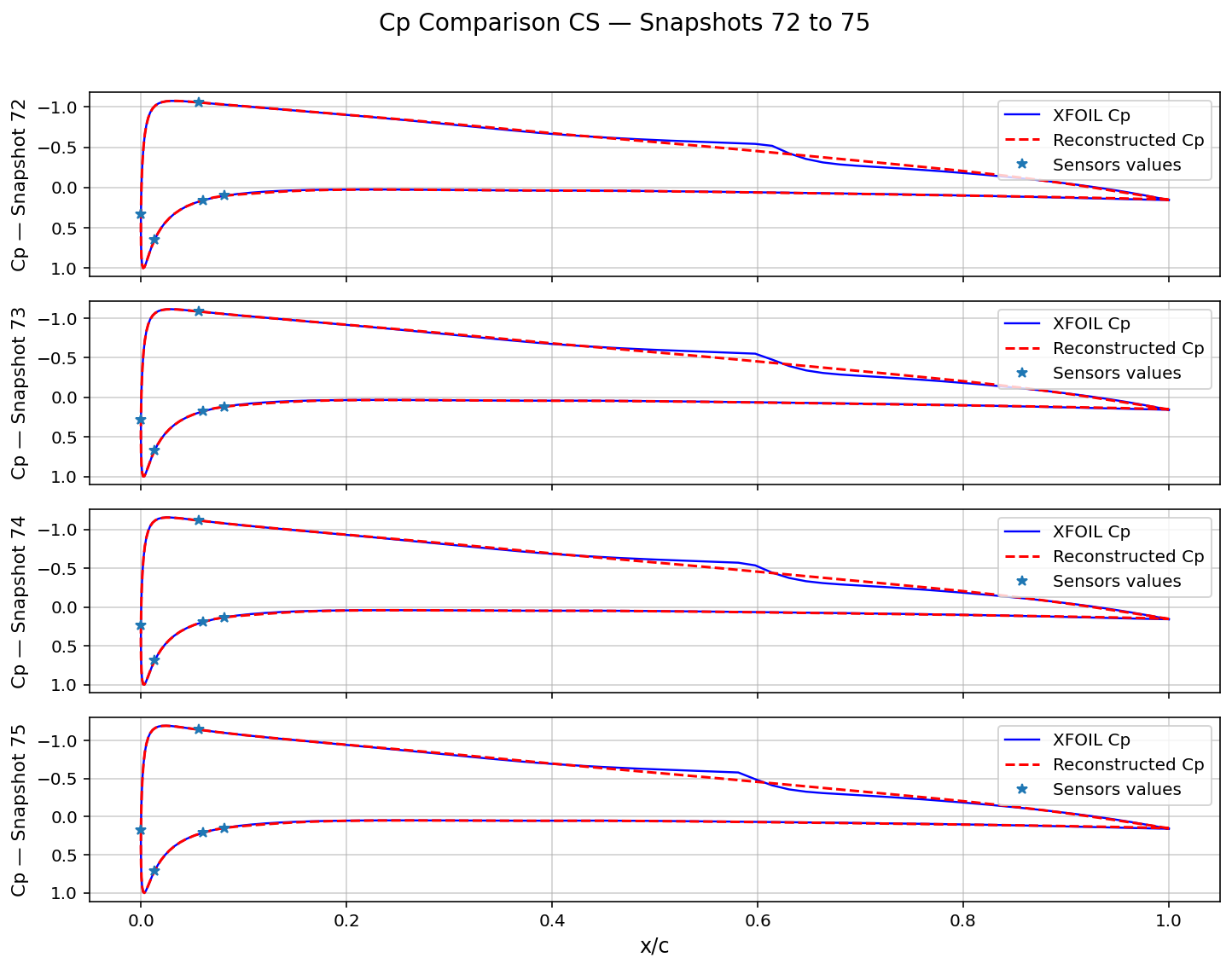}
    \end{minipage}
    
    \vspace{0.7cm}
    
    \begin{minipage}[t]{0.48\textwidth}
        \centering
        \includegraphics[width=\textwidth, height=0.34\textheight, keepaspectratio]{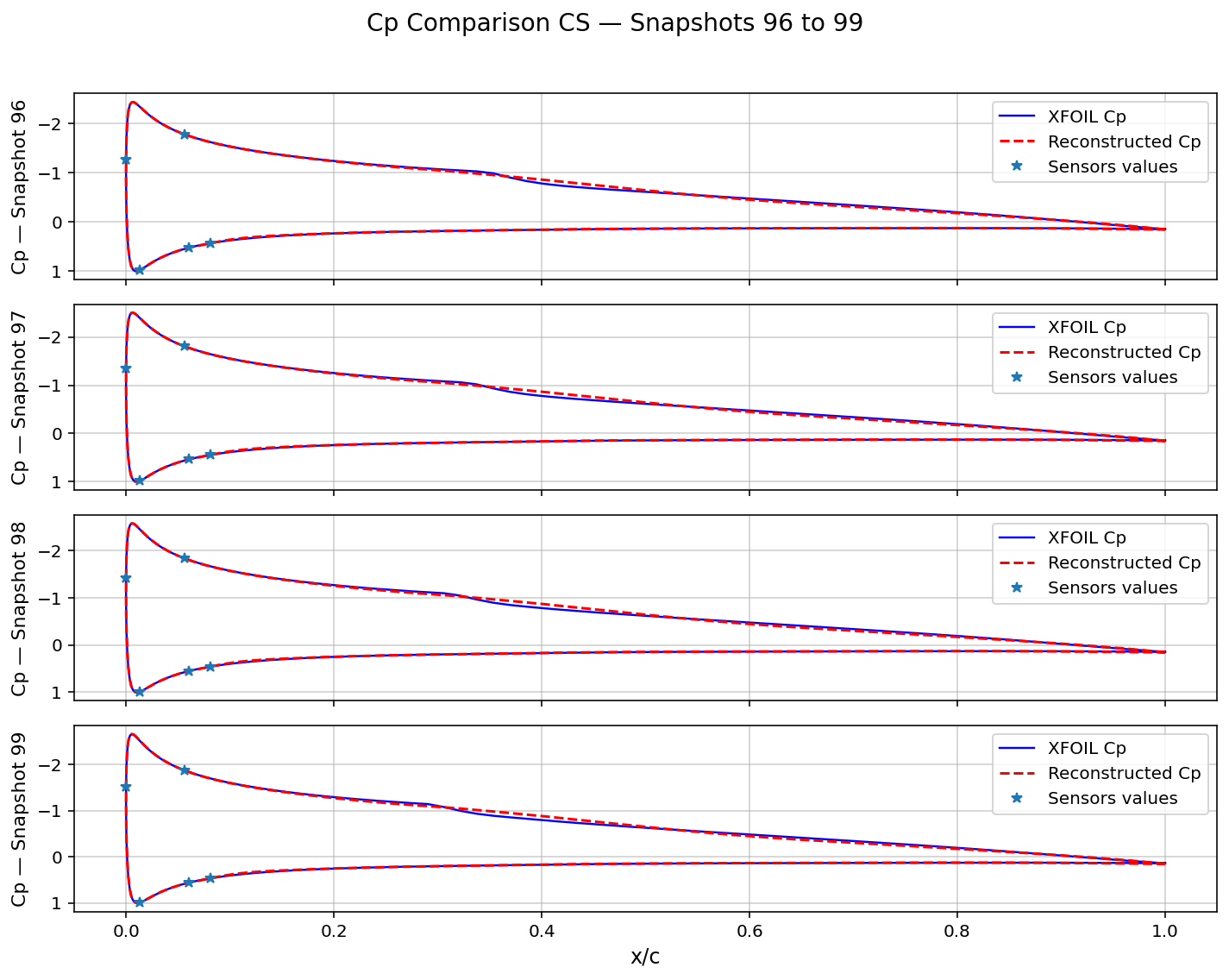}
    \end{minipage}
    
    \caption{Comparison of pressure coefficient \(C_p\) distributions for the NACA 2412 test set: XFOIL reference (blue solid) versus reconstruction obtained with the Inverse Cutting Sphere algorithm (Algorithm \ref{alg: inv inexact}) at \(\varepsilon = 0.095\) (red dashed). Blue stars mark the sensor measurements. Snapshots correspond to angles of attack from \(-7^\circ\) (snapshot 0) to \(+7^\circ\) (snapshot 99).}
    \label{fig:cp-comparison-cs}
\end{figure*}
	
	\section{Conclusion}

In this work we addressed the sensor placement problem for signal reconstruction in the absence of an explicit dynamical model. We formulated the problem as a nonconvex combinatorial optimisation task and showed that it can be recast as a weakly convex constrained projection problem. The reformulations proposed enabled us to apply the Inexact Cutting Sphere algorithm, thereby obtaining, for the first time, \emph{$\varepsilon$-global} solutions for the sensor selection problem.

We further proposed the Inverse Cutting Sphere algorithm, a novel method that starts from any feasible solution and either improves it by a prescribed amount $\varepsilon$ or certifies its $\varepsilon$-global optimality. The algorithm is particularly useful in practice because it can be warm-started from existing heuristics such as QDEIM and provides a rigorous optimality certificate when it terminates with $\texttt{STOP}=\texttt{TRUE}$.

Extensive numerical experiments were conducted on pressure reconstruction tasks for four NACA airfoils using high-fidelity XFOIL data. The results demonstrate that the cutting-sphere-based methods consistently achieve better or equal reconstruction accuracy compared with the widely used QDEIM heuristic, while also yielding superior values of the underlying optimality criteria ($-\log\det$ or condition number). At the same time, the experiments confirmed that QDEIM already produces high-quality sensor placements.

The main practical limitation of the proposed approach remains its computational cost, which grows with the number of constraints generated by the outer-approximation scheme. Nevertheless, the framework offers a valuable tool for sensor placement design when solution quality is more important than fast performance, and provides a rigorous benchmark against which faster heuristics can be evaluated.

Future research directions include the extension of the methodology to dynamic and time-varying sensor placement problems and the development of accelerated variants of the cutting-sphere algorithms in order to solve other scientific applications. We also want to extend our approach to sensors placement for classification, see \cite{BruntonClass}, and to the non-deterministic case, see \cite{kakasenko2026bridging}.


		\bibliographystyle{plain}
		\bibliography{Sreference.bib}

\end{document}